\newtheorem{thm}{Theorem}[section]
\newtheorem{theorem}[thm]{Theorem}
\newtheorem{proposition}[thm]{Proposition}
\newtheorem{definition}[thm]{Definition}
\newtheorem{observation}[thm]{Observation}
\title{Rationality of the universal \\ K3 surface of genus 8}
\author{Daniele Di Tullio}
\begin{document}

\maketitle

\begin{abstract}
The aim of the present paper is to prove the rationality of the universal family of polarized $ K3 $ surfaces of degree 14. This is achieved by identifying it with the moduli space of cubic fourfolds plus the data of a quartic scroll. The last moduli space is finally proved to be rational since it has a natural structure of $\mathbb P^n$-bundle over a $ k $-stably rational variety with $k \leq n$.
\end{abstract}
\section{Moduli of polarized K3 surfaces}
The properties of the 19-dimensional moduli spaces $ \mathcal{F}_g $ of polarized $ K3 $ surfaces of genus $ g $ (equivalently of degree $ 2g - 2 $), parametrizing up to isomorphism pairs $ (S,H) $, where $ S $ is a $ K3 $ surface and $ H\in \operatorname{Pic}(S) $ is a big and nef class satisfying $ {H^{2} = 2g - 2} $, have been deeply studied in the recent years as well as the universal families
\[
\mathcal{F}_{g,k}\to\mathcal{F}_{g}
\]
which parametrize up to isomorphism triples $ (S,P,H) $, where $ P\in  S^{[k]}$ (it is an Hilbert Subscheme of $ S $ of dimension 0 and length $ k $). An interesting
problem is to know what type of varieties they are.  Gritsenko, Hulek and Sankaran in \cite{GHS} proved that $ \mathcal{F}_{g} $ is a variety of general type for $ g > 62 $ and for $ {g = 47,51,53,55,58,59,61} $. On the other side $\mathcal{F}_{g, 1}  \to\mathcal{F}_{g} \color{black}$ is never of general type. Indeed this morphism is fibered in Calaby-Yau varieties hence, by Iiyaka's easy addition formula, its Kodaira dimension is bound by $\dim \mathcal F_g = 19$.  Then $\mathcal F_{g,1}$ cannot be of general type, cfr. \cite[5.4]{K3 genus 14}. 
Mukai, in a celebrated series of papers,  proved the unirationality of $ \mathcal{F}_{g,1} $ for $ g \leq 12 $, and for $ g = 13,16,18,20 $.  See  \cite{M1},\cite{M2},\cite{M3},\cite{M4}],\cite{M5}. Recently Farkas and Verra proved in \cite{K3 genus 14} that the universal $ K3 $ surface $ \mathcal{F}_{14,1} $ is rational and in 
\cite{K3 genus 22} the unirationality of $ \mathcal{F}_{22,1} $.

The main result of the present paper is
\begin{theorem}
	$ \mathcal{F}_{8,1} $ is a rational variety.
	\label{Rationality U K3}
\end{theorem}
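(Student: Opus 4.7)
The plan is to prove the theorem by realising $\mathcal{F}_{8,1}$ birationally as a moduli space $\mathcal{M}$ of enriched triples $(X,T,p)$, where $X\subset\mathbb{P}^5$ is a smooth cubic fourfold, $T\subset X$ is a quartic rational normal scroll, and $p$ is a projective datum attached to $(X,T)$ encoding the marked point on the associated K3 surface. Once this identification is set up, I will exhibit $\mathcal{M}$ as a projective bundle of some dimension $n$ over a small base $\mathcal{B}$, and verify that $\mathcal{B}$ is $k$-stably rational for some $k\le n$; a standard lemma on projective bundles then yields the rationality of $\mathcal{M}$, hence of $\mathcal{F}_{8,1}$.

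\textbf{Step 1 (geometric correspondence).} The starting point is the Hassett-type correspondence between the Noether--Lefschetz divisor $\mathcal{C}_{14}$ of cubic fourfolds containing a quartic scroll and the moduli space $\mathcal{F}_8$ of polarised K3 surfaces of degree~$14$. I would upgrade this Hodge-theoretic statement to an explicit birational equivalence, following the constructive rationality results for cubics in $\mathcal{C}_{14}$ (e.g.\ Bolognesi--Russo--Stagliano via the pencil of residual quadrics to $T$): from a general pair $(X,T)$ one reconstructs $(S,H)$ as a natural parameter surface, giving a birational map $\mathcal{F}_8\dashrightarrow\mathcal{M}_0$, where $\mathcal{M}_0$ is the $\mathrm{PGL}_6$-quotient of pairs $(X,T)$.

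\textbf{Step 2 (marked point and projective bundle structure).} To pass from $\mathcal{F}_8$ to $\mathcal{F}_{8,1}$ I need to add a two-dimensional piece of data. The natural candidates, read off from the geometry of Step~1, are either a point of the scroll $T$ or a point of an auxiliary rational surface canonically attached to $(X,T)$; the correct choice will be dictated by the explicit reconstruction of $S$. After fixing this datum $p$, the forgetful map $\mathcal{M}\to\mathcal{B}$, where $\mathcal{B}$ parametrises pairs $(T,p)$ modulo $\mathrm{PGL}_6$, has fibre the linear system $|\mathcal{I}_{T/\mathbb{P}^5}(3)|$ of cubics through $T$. A routine computation gives the fibre dimension $n$, and the projection is Zariski-locally trivial on the open locus where the scroll can be normalised, since the generic quartic scroll in $\mathbb{P}^5$ is projectively unique.

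\textbf{Step 3 (rationality of the base and conclusion).} Projective uniqueness of the generic quartic scroll reduces $\mathcal{B}$ to the quotient of the datum-space by the stabiliser of a fixed scroll in $\mathrm{PGL}_6$, a solvable algebraic group whose invariants are easy to describe. This makes $\mathcal{B}$ manifestly unirational and, by a direct analysis of the group action, $k$-stably rational for some small $k$. Checking that $k\le n$ — an inequality comfortably satisfied because the fibre dimension $n$ is large — one invokes the standard lemma that a $\mathbb{P}^n$-bundle over a $k$-stably rational base with $k\le n$ is rational, and concludes.

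The main obstacle will be Step~1: the Hassett association is Hodge-theoretic in nature, and turning it into an explicit, functorial birational equivalence that is moreover compatible with the projective-bundle framework of Step~2 is the technical heart of the argument. Once the geometric reconstruction is pinned down, Step~2 should follow naturally and Step~3 reduces to a dimension count plus the standard projective-bundle rationality lemma.
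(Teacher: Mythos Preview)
Your overall architecture---realise $\mathcal{F}_{8,1}$ via cubic fourfolds with a quartic scroll, then exhibit a $\mathbb{P}^n$-bundle over a small $k$-stably rational base with $k\le n$---is exactly the paper's strategy. However, the concrete implementation you propose in Steps~2 and~3 does not work, and the paper's actual bundle is quite different from the one you sketch.

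The first gap is in Step~2: there is no extra datum $p$. For $X\in\mathcal{C}_{14}$ the associated K3 surface $S$ is precisely the Hilbert scheme of quartic scrolls contained in $X$, so a pair $(X,R)$ already determines a point $[R]\in S$. Hence $\tilde{\mathcal{C}}_{14}=\{(X,R)\}/\operatorname{PGL}(6)$ is itself birational to $\mathcal{F}_{8,1}$. With the scroll playing the role of the marked point, your base $\mathcal{B}=\{(T,p)\}/\operatorname{PGL}(6)$ collapses: quartic scrolls are projectively unique, so one is simply left with $\mathbb{P}^{27}/G_R$ where $G_R=\operatorname{Stab}(R)\cong(\operatorname{PGL}_2)^2$. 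This group is reductive, not solvable as you assert in Step~3, and the quotient has no obvious projective-bundle structure of the type you describe; your ``fibre $=|\mathcal{I}_T(3)|$'' claim ignores the residual action of the stabiliser.

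The paper's key idea, which your outline is missing, is how to manufacture a genuine $\mathbb{P}^{16}$-bundle structure on $\mathfrak{C}_R=\mathbb{P}^{27}$ that is equivariant for $(\operatorname{PGL}_2)^2$. One observes that $R\cong\mathbb{P}^1\times\mathbb{P}^1$ lies in a \emph{unique} Segre threefold $T=\mathbb{P}^1\times\mathbb{P}^2\subset\mathbb{P}^5$ as a divisor of type $(0,2)$; restricting a cubic $X\supset R$ to $T$ and stripping off the equation of $R$ leaves a $(3,1)$-divisor on $T$, which in turn cuts a $(3,2)$-curve on $R$. This gives a $(\operatorname{PGL}_2)^2$-equivariant linear projection $\mathfrak{C}_R\to|\mathcal{O}_{\mathbb{P}^1\times\mathbb{P}^1}(3,2)|$ with $\mathbb{P}^{16}$ fibres. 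Kempf's descent criterion then yields $\tilde{\mathcal{C}}_{14}\sim_{\rm bir}\mathbb{P}^{16}\times\operatorname{Pic}_{3,2}$, since $|\mathcal{O}(3,2)|/\!/(\operatorname{PGL}_2)^2$ is a birational model of $\operatorname{Pic}_{3,2}$. The final step---showing that $\mathbb{P}^1\times\operatorname{Pic}_{3,2}$ is rational---is done by identifying this product with the projectivised Hodge bundle $\mathbb{K}_{3,2}$ and computing explicit invariants for a $(\mathbb{C}^*)^3\rtimes(\mathbb{Z}/2)$-action; it is not as routine as your Step~3 suggests.
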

The proof consists of two steps
\begin{itemize}
	\item[1)] To prove that $ \mathcal{F}_{8,1} $ is birational to $ \mathbb{P}^{16}\times \operatorname{Pic}_{3,2} $;
	\item[2)] To show that $ \mathbb{P}^{1}\times\operatorname{Pic}_{3,2} $ is rational, so a fortiori $ \mathcal{F}_{8,1} $ is.
\end{itemize}
%%%%%%%%%%%%%%%%%%%%%%%%%%%%%%%%%%NEW SECTION***********************************
\section{Moduli of cubic fourfolds and Hassett divisors.}
Cubic fourfolds are hypersurfaces of degree 3 in a 5-dimensional projective space. Their moduli space admits a rather intuitive description: it is the GIT quotient $ \mathbb{P}(H^{0}(\mathcal{O}_{\mathbb{P}^{5}}(3)))/\hspace{-4pt}/\operatorname{PGL}(6) $, see \cite{MFK} and \cite{Laza 2009} for the characterization of the set of semistable points for this group action. A less immediate way to parametrize them is via period mappings and Hodge theory. Claire Voisin established in \cite{Voisin} a global Torelli theorem for cubic fourfolds, so introducing  the power of Hodge theory in the study of their moduli space. Other properties of the period map have been studied by Radu Laza, Eduard Looijenga in \cite{Laza 2010} and \cite{Looijenga}. 

In \cite{Hasset} Hassett used Hodge theoretic methods to construct certain divisors in the moduli of cubic fourfolds: it is known that for general cubic fourfold $ X\in \mathcal{C} $ there are no primitive algebraic 2-cycles.
algebraic 2-cycles, i.e. $H^{2,2}(X, \mathbb Z) = \mathbb{Z}h^{2}$. Notice also that the homological and numerical equivalences on $X$ implies that $H^{2,2}(X, \mathbb Z)$ is
the N\`eron-Severi lattice $\operatorname{NS}^2(X)$ of codimension $2$ algebraic cycles modulo numerical equivalence.
Hassett showed that the condition of containing an unexpected algebraic 2-cycle is divisorial in $ \mathcal{C} $. These cubic fourfolds are called "special" and some countable union of  their divisorial families is expected to be the closure of the set of all rational examples. Furthermore every divisor parametrizing an irreducible family of special cubic fourfolds $X$ is characterized as follows. For a general $X$ in the family the lattice $\operatorname{NS}^2(X)$ has rank two and in this case it is isomorphic to a fixed, positive definite rank two lattice $\mathbb L_d$ of discriminant $d$, generated by $h^2$ and by a further exceptional class. In particular each irreducible component is labeled by some $\mathbb L_d$, see \cite{Hasset} and \cite{Laza 2010}. 
An important construction made in \cite{Hasset} is that of a polarized K3 surface $(S,H)$, which appears as associated to $X$ if $X$ belongs to the above mentioned countable union. Then the orthogonal complement of the embedding of $\mathbb L_d$ in $H^{2,2}(X, \mathbb Z)$ as $\operatorname{NS}^2(X)$ is isomorphic to the primitive Hodge structure of a polarized K3 surface $(S,H)$ such that $H^2 = d$. In particular we have $d = 2g-2$, where $g$ is the so called genus of $(S,H)$. Hassett shows that the isomorphism exists if $d$ is an admissible number ($ d=2(n^2+n+1),n\geq 2 $). In particular the number $2g-2$ is admissible if $ g=n^{2}+n+2 $ . In these very special cases it happens that the Fano 
variety $F(X)$ of the lines in $X$ is birational to the Hilbert scheme $S^{[2]}$ of two points of $S$. Moreover one can recover
$F(X)$ and then $X$ from $S^{[2]}$. Hassett shows that this defines a natural rational map
\[
\mathcal{F}_{n^2+n+2}\to \mathcal{C}_{2(n^2+n+1)}
\]
which is birational for $ n \equiv 0,2\mod{3} $ and of degree $ 2 $ for $ n \equiv 1  ( \mod3)  $. In particular for $ n=2 $ there is a birational morphism
\[
\mathcal{F}_{8}\to \mathcal{C}_{14}.
\]

For  the initial values of $ d  = 2g-2 $ there is a more concrete description of the $ K3 $ surface associated to a special cubic fourfold $ X\in \mathcal{C}_{d} $.To see it we previously introduce some notation. Let us consider an integral
rational scroll $R \subset \mathbb P^5$. We assume that ${\rm Sing}(R)$ consists of finitely many \it non normal nodes \rm $o$. This means by definition that $o$ has multiplicity $2$, the normalization of $R$ is smooth  and the inverse image of $o$ is a set of two points. For simplicity we introduce the following:
\begin{definition}  $R$ is a \it rational nodal scroll. We say that $R$ is a $\delta$-nodal rational scroll if $\vert \operatorname{Sing}(R) \vert = \delta$.
\end{definition}
Denoting by an overline the topological closure in $\mathcal C$, \color{black} we have that:
\begin{itemize}
	\item $ \mathcal{C}_{14}:= \overline{\{\text{Cubic fourfolds containing a smooth  quartic scroll}\}}$;
	\item $ \mathcal{C}_{26}:=\overline{\{\text{Cubic fourfolds containing a 3-nodal  rational scroll of degree $7$} \}} $;
	\item $ \mathcal{C}_{42}:=\overline{\{\text{Cubic fourfolds containing an 8-nodal  rational scroll of degree 9}\}} $.
\end{itemize}
For these values of $ d $ the associated $ K3 $ surface to a general  $ X\in \mathcal{C}_{d} $ is isomorphic to the  Hilbert Scheme of the corresponding scrolls contained in $ X $. Setting \color{black}
\[
\tilde{\mathcal{C}}_{14}:=\{(X,R):X\in \vert \mathcal{O}_{\mathbb{P}^{5}}(3)\vert, R\subset X\text{ is a quartic scroll} \} /\hspace{-4pt}/ \operatorname{PGL}(6)
\]
we can therefore  lift the birational morphism $ \mathcal{F}_{8}\to \mathcal{C}_{14} $
\begin{center}
	\begin{tikzpicture}
	\node (1) at (0,0) {$\tilde{\mathcal{C}}_{14}$};
	\node (2) at (2,0) {$\mathcal{F}_{8,1}$};
	\node (3) at (0,-1) {$\mathcal{C}_{14}$};
	\node (4) at (2,-1) {$\mathcal{F}_{8}.$};
	\path[->]
	(1) 	edge 	(2)
	(1)  edge  (3)
	(2)	edge (4)
	(3) edge (4);
	\end{tikzpicture}
\end{center}
So Theorem \ref{Rationality U K3} can be restated
\begin{theorem}
	$\tilde{\mathcal{C}}_{14}$ is a rational variety.
\end{theorem}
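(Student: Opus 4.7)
The plan is to follow the two subsidiary steps announced in the introduction. The preliminary reduction is essentially free: the commutative square displayed above combines Hassett's birationality $\mathcal{F}_{8}\sim\mathcal{C}_{14}$ (the case $n=2$) with the compatible fibrations along the vertical edges---the K3 surface $S$ associated to a generic $X\in\mathcal{C}_{14}$ is the Hilbert scheme of quartic scrolls contained in $X$, and the marked point of $S$ in $\mathcal{F}_{8,1}$ is exactly the quartic scroll $R\subset X$ in $\tilde{\mathcal{C}}_{14}$---forcing the top arrow $\tilde{\mathcal{C}}_{14}\to\mathcal{F}_{8,1}$ to be birational as well. Rationality of $\tilde{\mathcal{C}}_{14}$ is thus equivalent to rationality of $\mathcal{F}_{8,1}$.

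For step (1) I would argue on the cubic side. Smooth quartic scrolls belonging to the distinguished family cut out by $\mathcal{C}_{14}$ form a single $\operatorname{PGL}(6)$-orbit up to projective equivalence, so after fixing a reference scroll $R_{0}\subset\mathbb{P}^{5}$ the variety $\tilde{\mathcal{C}}_{14}$ is birational to the GIT quotient
\[
\mathbb{P}\bigl(H^{0}(\mathcal{I}_{R_{0}}(3))\bigr)/\hspace{-4pt}/\operatorname{Stab}_{\operatorname{PGL}(6)}(R_{0}).
\]
The heart of the step is to realise this quotient as a $\mathbb{P}^{16}$-bundle over $\operatorname{Pic}_{3,2}$. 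I would search for a canonical geometric object attached to $R_{0}$ (for example a natural quadric, a specific surface spanned by the ruling, or the secant variety of a directrix) whose residual intersection with a generic cubic $X\supset R_{0}$ produces a curve $C$ together with a distinguished line bundle $L$; the resulting assignment $(X,R_{0})\mapsto(C,L)$ should be $\operatorname{Stab}(R_{0})$-equivariant, dominant onto $\operatorname{Pic}_{3,2}$, and have generic fibre a linear system of projective dimension $16$. The numerical balance $16+5=21=\dim\mathcal{F}_{8,1}$ forces $\dim\operatorname{Pic}_{3,2}=5$ and constrains what the correct auxiliary moduli problem must be.

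For step (2) it suffices to prove that $\operatorname{Pic}_{3,2}$ is $1$-stably rational, for then
\[
\mathbb{P}^{16}\times\operatorname{Pic}_{3,2}\;\sim_{\mathrm{bir}}\;\mathbb{P}^{15}\times\bigl(\mathbb{P}^{1}\times\operatorname{Pic}_{3,2}\bigr)
\]
is manifestly rational. I would seek to describe $\operatorname{Pic}_{3,2}$ itself as a projective (or iterated projective) bundle over a moduli space of low-genus curves with extra markings, and then invoke, or bootstrap from, the classical rationality results in that range; the extra $\mathbb{P}^{1}$ factor supplies precisely the slack needed to absorb any residual Brauer-type obstruction that might prevent the bundle from being rational on the nose.

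The principal obstacle is step (1). The dimension count is forced, but finding the correct natural map $(X,R_{0})\mapsto(C,L)$ demands real projective-geometric insight into cubic fourfolds through a quartic scroll: one must identify the auxiliary variety whose residual intersection with $X$ produces $(C,L)$, prove that the assignment has linear fibres of the predicted dimension $16$, and verify its equivariance under $\operatorname{Stab}(R_{0})$ so that the projective bundle structure descends to the GIT quotient. I expect the technique to be analogous to, but more intricate than, the residual/trace constructions used by Farkas--Verra in the genus $14$ universal K3 problem.
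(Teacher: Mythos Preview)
Your strategic outline matches the paper's two-step plan exactly, and your dimension count $16+5=21$ is on target. But as you yourself flag, step~(1) is only a wish list: you have not identified the auxiliary variety, so the proposal is not yet a proof. The paper supplies precisely this missing piece, and it is worth recording because it is the one genuine geometric idea.

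The canonical object attached to a smooth quartic scroll $R\cong\mathbb{F}_{0}\subset\mathbb{P}^{5}$ is the \emph{unique cubic Segre product} $T=\mathbb{P}^{1}\times\mathbb{P}^{2}\hookrightarrow\mathbb{P}^{5}$ containing it: one shows $R\in|\mathcal{O}_{T}(0,2)|$, i.e.\ $R=\mathbb{P}^{1}\times B$ for a smooth conic $B\subset\mathbb{P}^{2}$, so $T$ is swept out by the planes spanned by the pencil of conics on $R$ and is therefore determined by $R$ alone. Now a cubic $X\supset R$ cuts $T$ in $R$ plus a residual divisor of type $(3,1)$ on $\mathbb{P}^{1}\times\mathbb{P}^{2}$; restriction $H^{0}(\mathcal{O}_{T}(3,1))\to H^{0}(\mathcal{O}_{R}(3,2))$ is an isomorphism, and this produces the curve $C\in|\mathcal{O}_{\mathbb{P}^{1}\times\mathbb{P}^{1}}(3,2)|$ of genus~$2$ together with $\mathcal{L}=\mathcal{O}_{C}(0,1)\in\operatorname{Pic}^{3}(C)$. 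The resulting linear projection $\mathfrak{C}_{R}=\mathbb{P}H^{0}(\mathcal{I}_{R}(3))\dashrightarrow|\mathcal{O}_{\mathbb{P}^{1}\times\mathbb{P}^{1}}(3,2)|$ has $\mathbb{P}^{16}$-fibres; since $\operatorname{Stab}_{\operatorname{PGL}(6)}(R)\cong\operatorname{Aut}(\mathbb{P}^{1})^{2}$ acts compatibly, Kempf's descent criterion plus Grauert's theorem show the quotient is a genuine $\mathbb{P}^{16}$-bundle over $\operatorname{Pic}_{3,2}$, hence Zariski-locally trivial. So your speculation about ``a specific surface spanned by the ruling'' was close; the point is that this surface is a Segre threefold, and the residual is read off on $R$ itself rather than on some secant variety.

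For step~(2) the paper again makes your vague ``projective bundle over low-genus moduli'' precise: the $\mathbb{P}^{1}$-bundle is the projectivised Hodge bundle $\mathbb{K}_{3,2}=\mathbb{P}(\Lambda_{3,2})\to\operatorname{Pic}_{3,2}$, whose fibre over $(C,\mathcal{L})$ is $|\omega_{C}|$. A point of $\mathbb{K}_{3,2}$ is a curve $C\in|\mathcal{O}_{\mathbb{P}^{1}\times\mathbb{P}^{1}}(3,2)|$ through two fixed points $o_{1},o_{2}$ on a ruling, modulo the stabiliser of $\{o_{1},o_{2}\}$ in $\operatorname{PGL}(2)^{2}$; an explicit invariant-theory computation (normalising one coefficient to kill the unipotent part, then listing six torus-and-involution invariants) gives a birational map $\mathbb{K}_{3,2}\dashrightarrow\mathbb{A}^{6}$. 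No Brauer obstruction enters; the argument is entirely classical.
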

\section{$ \tilde{\mathcal{C}}_{14} $ as a projective bundle over $ \operatorname{Pic}_{3,2} $}
The main idea of the proof is to show that $ \tilde{\mathcal{C}}_{14}$ has a structure of projective bundle over ${\operatorname{Pic}_{3,2}} $. Here we use the notation $ \operatorname{Pic}_{d,g} $ to denote the universal
Picard variety of line bundles of degree $ d $ over a curve of genus $ g $. In
what follows this is the coarse moduli space of pairs $ (C, \mathcal{L}) $ such that $ C $ is a
smooth integral curve of genus $ g $ and $ \mathcal{L} \in \operatorname{Pic}_{d}(C) $, see \cite{Harris Morrison} for the main general properties and definitions. In our case a birationally equivalent construction of it as a GIT quotient can be provided as follows.
Observe that
a pair $ (C, \mathcal{L}) $, defining a general point of $ \operatorname{Pic}_{3,2} $, provides an embedding
\[
C\hookrightarrow \mathbb{P}^{1}\times\mathbb{P}^{1}
\]
as a curve of type $ (3, 2) $ such that $ \mathcal{L}\cong \mathcal{O}_{C}(0,1) $ and $ \omega_{C}\cong \mathcal{O}_{C}{(1,0)} $. Then a
birationally equivalent model of $ \operatorname{Pic}_{3,2} $ is the GIT quotient
\[
\left|	\mathcal{O}_{\mathbb{P}^{1}\times\mathbb{P}^{1}}(3,2)	\right|/\hspace{-4pt}/\operatorname{Aut}(\mathbb{P}^{1})^{2}
\]
Let $ R \subset \mathbb{P}^{5}$ be a quartic scroll.  The moduli space $ \tilde{\mathcal{C}}_{14} $ is also described as a quotient
$
\mathfrak{C}_{R}/G_{R}
$
where \[ \mathfrak{C}_{R}:=\left\lbrace
X\in \left|\mathcal{O}_{\mathbb{P}^{5}}(3) \right|
\text{ s.t. }X\supset R
\right\rbrace\text{  and  }G_{R}:={\{f\in \operatorname{PGL}(5)\text{ s.t. }f(R)=R\}}\]
This is clear since every quartic scroll can be moved to a fixed one by an automorphism of  $ \mathbb{P}^{5} $ .
The first step is to construct a projective bundle
\[
\mathfrak{C}_{R}\to \left| \mathcal{O}_{\mathbb{P}^{1}\times\mathbb{P}^{1}}(3,2)\right|
\]
We recall that a Segre product is the embedding of $ \mathbb{P}^{a}\times\mathbb{P}^{b} $ defined by the line bundle $ \mathcal{O}_{\mathbb{P}^{1}\times \mathbb{P}^{2}}(1,1) $, up to projective automorphisms. The degree of this embedding is $ {a+b \choose a} $. Clearly a Segre product of degree 3 is $ \mathbb{P}^{1}\times \mathbb{P}^{2} $ embedded in $ \mathbb{P}^{5} $. For convenience in the exposition we will say that
\begin{definition}
	A cubic Segre product is an embedding $ \mathbb{P}^{1}\times\mathbb{P}^{2}\hookrightarrow \mathbb{P}^{5} $ as above.
\end{definition}
Note that $ \vert \mathcal{O}_{\mathbb{P}^{1}\times\mathbb{P}^{2}}(1,0)\vert $ is the pencil of two by two disjoint planes
\[
\{x\}\times \mathbb{P}^{2},x\in\mathbb{P}^{1}.
\]
On the other hand the elements of $ \vert \mathcal{O}_{\mathbb{P}^{1}\times \mathbb{P}^{2}} \vert$ are smooth quadric surfaces, namely products $ \mathbb{P}^{1}\times \mathcal{L} $ with $ \mathcal{L}\in \mathcal{O}_{\mathbb{P}^{2}}(1) $. We are interested to smooth quadric surfaces of degree 4 in $ \mathbb{P}^{1}\times\mathbb{P}^{2} $. To this purpose let us point out that the surfaces of degree 4 in $ \mathbb{P}^{1}\times\mathbb{P}^{2} $ are distributed in two linear systems:
\begin{itemize}
	\item  $ |\mathcal{O}_{\mathbb{P}^{1}\times\mathbb{P}^{2}}(0,2)| $;
	\item $ \left| \mathcal{O}_{\mathbb{P}^{1}\times \mathbb{P}^{2}}(2,1) \right| $.
\end{itemize}
The next propositions describe smooth quartic scrolls from these linear systems. We fix the usual notation $ \mathbb{F}_{n} $ for the $ \mathbb{P}^{1} $-bundle over $ \mathbb{P}^{1} $ with minimal section $ e $ of self intersection  $ -n $. We denote the fibre of $ \mathbb{F}_{n}\to \mathbb{P}^{1} $ by $ f $. As is well known $ \mathbb{F}_{n} $ is a Hirzebruch surface. We are interested to $ \mathbb{F}_{0}=\mathbb{P}^{1}\times \mathbb{P}^{1} $ and to $ \mathbb{F}_{2} $ which is a rank 3 quadric cone blown up at its vertex.
\begin{proposition}
	Let $ R\in \left| \mathcal{O}_{\mathbb{P}^{1}\times\mathbb{P}^{2}}(0,2) \right| $ be a smooth and irreducible divisor. Then $ R \cong \mathbb{F}_{0} $ and
	\[
	\mathcal{O}_{R}(1,0)\cong\mathcal{O}_{R}(e),\;\; \mathcal{O}_{R}(0,1)\cong \mathcal{O}_{R}(2f).
	\]
	Moreover a unique cubic Segre product contains $ R $ as an element of $ |\mathcal{O}_{\mathbb{P}^{1}\times\mathbb{P}^{2}}(0,2)| $.
\end{proposition}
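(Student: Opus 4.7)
Since a section of $\mathcal{O}_{\mathbb{P}^{1}\times\mathbb{P}^{2}}(0,2)$ is the pullback of a quadratic form on $\mathbb{P}^{2}$, one has $R = \mathbb{P}^{1} \times Q$ for some conic $Q \subset \mathbb{P}^{2}$. Smoothness of $R$ forces $Q$ to be smooth, so $Q \cong \mathbb{P}^{1}$ and $R \cong \mathbb{P}^{1}\times\mathbb{P}^{1} = \mathbb{F}_{0}$. We label the ruling classes so that $e$ is represented by a fiber $\{x\}\times Q$ of the first projection $R \to \mathbb{P}^{1}$ and $f$ by a fiber $\mathbb{P}^{1} \times \{q\}$ of the second projection $R \to Q \cong \mathbb{P}^{1}$.

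A section of $\mathcal{O}_{R}(1,0)$ cuts out $\{x\}\times Q$, a divisor of class $e$, hence $\mathcal{O}_{R}(1,0)\cong \mathcal{O}_{R}(e)$. A section of $\mathcal{O}_{R}(0,1)$ cuts out $\mathbb{P}^{1} \times (L\cap Q)$ for a line $L\subset \mathbb{P}^{2}$; since $L\cap Q$ consists of two points, this is a divisor of class $2f$, giving $\mathcal{O}_{R}(0,1)\cong\mathcal{O}_{R}(2f)$.

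For the uniqueness claim, the Segre embedding restricts to $R$ via $\mathcal{O}_{R}(1,1) = \mathcal{O}_{R}(e+2f)$, realizing $R$ as a quartic scroll in $\mathbb{P}^{5}$. Under this embedding, divisors in $|f|$ have degree $f\cdot(e+2f) = 1$ (lines, the scroll ruling), while divisors in $|e|$ have degree $e\cdot(e+2f) = 2$ (smooth plane conics). In particular the pencil $|e|$ is intrinsically determined by the embedded scroll $R\subset\mathbb{P}^{5}$, being distinguished from the line ruling $|f|$ by degree. Now let $\Sigma'\subset \mathbb{P}^{5}$ be any cubic Segre product containing $R$ as an element of $|\mathcal{O}_{\Sigma'}(0,2)|$. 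Applying the preceding two steps to $\Sigma'$, the plane fibers of its first projection $\Sigma' \to \mathbb{P}^{1}$ meet $R$ in divisors of $|\mathcal{O}_{R}(1,0)|=|e|$, and each such plane coincides with the linear span of the conic it contains. Since the pencil $|e|$ depends only on $R$, the planes composing $\Sigma'$ are forced to agree with those of the original Segre $\Sigma$, whence $\Sigma' = \Sigma$.

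The main delicate point I anticipate is the intrinsic characterization of the conic pencil $|e|$ on the embedded scroll $R\subset\mathbb{P}^{5}$, which distinguishes it from the line ruling $|f|$; once this is established, the uniqueness follows from the elementary observation that a plane containing a smooth conic equals its linear span.
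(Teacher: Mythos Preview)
Your proof is correct and follows essentially the same approach as the paper's: both identify $R=\mathbb{P}^{1}\times B$ for a smooth conic $B$, and recover the cubic Segre product as the union of the planes spanned by the conics $\{x\}\times B$. Your version is simply more explicit, spelling out the line bundle restrictions and the intrinsic distinction between the conic pencil $|e|$ and the line ruling $|f|$ on the embedded scroll, points which the paper leaves implicit.
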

\begin{proof}
	Let $ p_{2}:\mathbb{P}^{1}\times \mathbb{P}^{2}\to \mathbb{P}^{2} $ the second projection map. Since $ |R|= p_{2}^{*}|\mathcal{O}_{\mathbb{P}^{2}}(2)| $, then $ R=\mathbb{P}^{1}\times B $, where $ B\subset \mathbb{P}^{2} $ is a smooth conic. Moreover $ \mathbb{P}^{1}\times \mathbb{P}^{2} $ is the union of the planes $ \{x\}\times\mathbb{P}^{2}, x \in \mathbb{P}^{1} $, and $ \{x\}\times\mathbb{P}^{2} $ is spanned by the conic $ \{x\}\times B \subset R$. Hence $ \mathbb{P}^{1}\times\mathbb{P}^{2} $ is uniquely associated to $ R $.
\end{proof}
\begin{proposition}
	Let $ R \in \left| \mathcal{O}_{\mathbb{P}^{1}\times\mathbb{P}^{2}}(2,1) \right| $ be a smooth and irreducible divisor. Then $ R $ is $ \mathbb{F}_{n} $ with $ n\in \{0,2\} $ and
	\[
	\mathcal{O}_{R}(1,0)\cong \mathcal{O}_{R}(f),\;\; \mathcal{O}_{R}(0,1)\cong \mathcal{O}_{R}\left( \frac{n+2}{2}f+e \right).
	\]
	The family of cubic Segre products containing $ R $ as an element of $ |\mathcal{O}_{\mathbb{P}^{1}\times \mathbb{P}^{2}}(2,1)| $ is naturally parametrized by an open set of $ |\mathcal{O}_{R}(0,1)|^{*} $.
\end{proposition}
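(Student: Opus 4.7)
The plan has three stages: identify $R$ as a Hirzebruch surface via the first projection, pin down the restricted classes $\mathcal{O}_R(1,0)$ and $\mathcal{O}_R(0,1)$ using intersection numbers and the basepoint-free constraint coming from the second projection, and finally parametrize the cubic Segre products containing $R$ in terms of $3$-dimensional subspaces of $H^0(R,\mathcal{O}_R(0,1))$.

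First I would restrict $p_1:\mathbb{P}^1\times\mathbb{P}^2\to\mathbb{P}^1$ to $R$. Since a section of $\mathcal{O}_{\mathbb{P}^1\times\mathbb{P}^2}(2,1)$ cuts a line on every fibre $\{x\}\times\mathbb{P}^2$ of $p_1$, the restriction $p_1|_R$ is a $\mathbb{P}^1$-bundle, so $R\cong\mathbb{F}_n$ and $f=\mathcal{O}_R(1,0)$. Writing $\mathcal{O}_R(0,1)=af+be$ in $\operatorname{Pic}(\mathbb{F}_n)$, the intersection numbers $\mathcal{O}_R(0,1)\cdot f=1$ and $\mathcal{O}_R(0,1)^2=2$, computed in the Chow ring of $\mathbb{P}^1\times\mathbb{P}^2$ and matched against $f^2=0$, $f\cdot e=1$, $e^2=-n$, force $b=1$ and $a=(n+2)/2$; in particular $n$ must be even. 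To exclude $n\geq 4$, I would use that $p_2|_R$ is a morphism, so $\mathcal{O}_R(0,1)=\tfrac{n+2}{2}f+e$ is basepoint free on $\mathbb{F}_n$; but on $\mathbb{F}_n$ with $n\geq 1$ the negative section $e$ is a fixed component of $|af+e|$ whenever $a<n$, hence $(n+2)/2\geq n$, leaving $n\in\{0,2\}$.

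For the final assertion, set $U:=H^0(R,\mathcal{O}_R(1,0))$, a fixed $2$-dimensional space intrinsic to $R$. A short Riemann-Roch computation (or direct inspection: $|\mathcal{O}_R(0,1)|$ embeds $\mathbb{F}_0$ as a smooth quadric in $\mathbb{P}^3$ and contracts $e$ on $\mathbb{F}_2$ to a quadric cone) gives $h^0(R,\mathcal{O}_R(0,1))=4$ and $h^0(R,\mathcal{O}_R(1,1))=6$ for both values of $n$. Any cubic Segre product $\Sigma\supset R$ determines a $3$-dimensional subspace $V\subset H^0(R,\mathcal{O}_R(0,1))$, namely the pullback of linear forms on its $\mathbb{P}^2$-factor; conversely $(U,V)$ reconstructs $\Sigma$ as the corresponding Segre image. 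Matching the ambient $\mathbb{P}^5$ of $\Sigma$ with the one containing $R$ is equivalent to requiring the multiplication map
\[
U\otimes V\;\longrightarrow\;H^0(R,\mathcal{O}_R(1,1))
\]
to be an isomorphism onto the $6$-dimensional hyperplane system of $R\subset\mathbb{P}^5$. Together with basepoint-freeness of $V$, this is an open condition. Since $3$-dimensional subspaces of the $4$-dimensional $H^0(R,\mathcal{O}_R(0,1))$ are parametrized by $G(3,4)=|\mathcal{O}_R(0,1)|^*$, the claimed parametrization follows.

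The main technical hurdle is this last correspondence $\Sigma\leftrightarrow V$, where one must check that a generic $V$ really does reproduce the given embedding of $R$ in $\mathbb{P}^5$ via its Segre map. Concretely, the full multiplication $U\otimes H^0(R,\mathcal{O}_R(0,1))\to H^0(R,\mathcal{O}_R(1,1))$ is a surjection from an $8$-dimensional space onto a $6$-dimensional one, and the isomorphism condition on $U\otimes V$ translates into its $2$-dimensional kernel meeting $U\otimes V$ trivially, which is a nonempty open condition on $V\in|\mathcal{O}_R(0,1)|^*$.
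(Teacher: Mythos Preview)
Your argument is correct. The route diverges from the paper's chiefly in the first stage: the paper analyses the \emph{second} projection $p_2|_R:R\to\mathbb{P}^2$, which is a double cover branched along the conic $\{b^2-4ac=0\}$ (writing the equation of $R$ as $aZ_0^2+bZ_0Z_1+cZ_1^2$ with $a,b,c$ linear in the $\mathbb{P}^2$-coordinates); smoothness of $R$ forces this conic to have rank $\ge 2$, and the two cases rank $3$/rank $2$ give $\mathbb{F}_0$/$\mathbb{F}_2$ directly. You instead use the \emph{first} projection to see the $\mathbb{P}^1$-bundle structure, then pin down $n$ via the self-intersection $\mathcal{O}_R(0,1)^2=2$ and the basepoint-freeness constraint $(n+2)/2\ge n$. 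Your approach makes the formula $\mathcal{O}_R(0,1)\cong\mathcal{O}_R\big(\tfrac{n+2}{2}f+e\big)$ fall out of the computation rather than being read off from the quadric model, and it avoids any discussion of the branch locus; the paper's approach, on the other hand, gives a more geometric picture of which $R$'s are $\mathbb{F}_0$ versus $\mathbb{F}_2$.

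For the parametrization of Segre products, the two arguments coincide in substance: both identify a Segre product containing $R$ with a codimension-one subspace $V\subset H^0(\mathcal{O}_R(0,1))$ and check that the multiplication $U\otimes V\to H^0(\mathcal{O}_R(1))$ is an isomorphism. Your treatment is more explicit about why this is an \emph{open} condition (the $2$-dimensional kernel of $U\otimes H^0(\mathcal{O}_R(0,1))\twoheadrightarrow H^0(\mathcal{O}_R(1,1))$ meeting $U\otimes V$ trivially), which is a point the paper leaves to the reader.
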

\begin{proof}
	We have that $ p_{2}|_{R} :R\to\mathbb{P}^{2}$ is a generically finite morphism of degree 2. Fixing bi-homogeneous coordinates $ [Z_{0}:Z_{1}]\times [X_{0}:X_{1}:X_{2}] $ on $ \mathbb{P}^{1}\times \mathbb{P}^{2} $, the equation of $ R $ is $ aZ_{0}^{2} + b Z_{0}Z_{1} +cZ_{1}^{2}=0 $, where $ a,b,c $ are linear forms in $ [X_{0}:X_{1}:X_{2}] $. Since $ R $ is smooth, then the branch curve $ B $ of $ p_{2}|_{R} $ is a conic of rank $ \geq 2 $. If $ B $ is smooth then $ p_{2}|_{R} $ is finite and $ R\cong \mathbb{F}_{0} $. If $ B $ has rank $ 2 $, then $ R $ is the blowing up of a quadric cone in its singular point, so it is $ \mathbb{F}_{2} $. Now observe that $ V _1:={p_{1}}|_{R}^{*}H^{0}(\mathcal{O}_{\mathbb{P}^{1}}(1))=H^{0}(\mathcal{O}_{R}(f)) $ and that $ \mathcal{L}:=p_{2}|_{R}^{*}\mathcal{O}_{\mathbb{P}^{2}}(1) $ is the line bundle defining the model of $ R $ as a quadric surface. in particular $ V_{2}:=p_{2}|_{R}^{*}H^{0}(\mathcal{O}_{\mathbb{P}^{2}}(1)) $ has codimension 1 in $ H^{0}(\mathcal{L}) $. Finally consider
	\begin{center}
		\begin{tikzpicture}
		\node (1) at (0,0) {$	H^{0}(\mathcal{O}_{\mathbb{P}^{1}\times\mathbb{P}^{2}}(1,0))\otimes
			H^{0}(\mathcal{O}_{\mathbb{P}^{1}\times\mathbb{P}^{2}}(0,1))$};
		\node (2) at (5,0) {$V_{1}\otimes V_{2}$};
		\node (3) at (7.5,0) {$H^{0}(\mathcal{O}_{R}(1))$};
		\path[->]
		(1) 	edge node[above]{$ r $}	(2)
		(2)  edge node[above]{$m$} (3);
		\end{tikzpicture}
	\end{center}
where $ r $ is the restriction and $ m $ is the multiplication map. It is standard to check that both $ r $ and $ m $ are isomorphisms. This implies that $ V_{2} $ uniquely reconstructs $ \mathbb{P}^{1}\times\mathbb{P}^{2} $ and that the family of cubic Segre products containing $ R $ as an element of $ |\mathcal{O}_{\mathbb{P}^{1}\times\mathbb{P}^{2}}(2,1)| $ is birationally parametrized by $ |\mathcal{O}_{R}(0,1)|^{*} $.
\end{proof}
Let $ R $ be a smooth quartic scroll, that is a Hirzebruch surface $ \mathbb{F}_{n} $ with $ n\in \{0,2\} $. Keeping in account the previous propositions and their proofs, it is easy to associate to $ R $ a union of planes $ T $ containing $ R $ as follows. Consider the pencil $ \left| f+\frac{n}{2}e \right| $ and the union of planes
\[
T:=\bigcup_{c\in\left| f+\frac{n}{2}e \right|}T_{c}
\]
where $ T_{c} $ is the plane spanned by $ c $. Notice that $ c $ is a smooth conic if $ n=0 $ and the rank 2 conic $ e+f',f'\in|f|$ if $ n=2 $.

\begin{theorem}
	$ R $ is in a unique cubic Segre product if $ n=0 $ and in a unique cone of vertex $ e $ over a rational normal cubic if $ n=2 $.
\end{theorem}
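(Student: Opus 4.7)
The plan is to identify the union of planes $T$ defined just above as the claimed ambient $3$-fold, a cubic Segre product when $n=0$ and a cone with vertex $e$ over a rational normal cubic when $n=2$, and then to derive uniqueness from the preceding propositions together with a projection argument. I would treat the two cases separately.

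For $n=0$, I would invoke the proposition on $(0,2)$-divisors directly. It produces a unique cubic Segre product $Y = \mathbb{P}^{1}\times\mathbb{P}^{2} \subset \mathbb{P}^{5}$ containing $R$ as a $(0,2)$-divisor, so that $R \cong \mathbb{P}^{1}\times C$ for some smooth conic $C$. The rulings $\{x\}\times\mathbb{P}^{2}$ of $Y$ cut $R$ in the conics $\{x\}\times C$, which form precisely the pencil of smooth conics on $R$ used to construct $T$, and each such $\mathbb{P}^{2}$ is the span of the corresponding conic. Hence $T$ coincides with $Y$, the cubic Segre product in question.

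For $n=2$, I would begin with the computation $e\cdot\mathcal{O}_{R}(1) = e\cdot(e+3f) = 1$, showing that the minimal section $e$ is a line of $\mathbb{P}^{5}$. Projecting $R$ away from $e$ onto $\mathbb{P}^{3}\cong\mathbb{P}^{5}/e$ is then given by the linear system $|\mathcal{O}_{R}(1)-e| = |3f|$, which is pulled back from $|\mathcal{O}_{\mathbb{P}^{1}}(3)|$ on the base $\mathbb{P}^{1}$ of the Hirzebruch ruling $\mathbb{F}_{2}\to\mathbb{P}^{1}$. The image is therefore a rational normal cubic $C\subset\mathbb{P}^{3}$, and $R$ is contained in the cone $\operatorname{Cone}(e,C)$, a $3$-fold of degree $3$. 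Each plane $T_{c} = \langle e,f'\rangle$ associated to the rank-$2$ conic $c = e+f'$ projects from $e$ to a point of $C$, and together the $T_{c}$ sweep out the whole cone, so $T = \operatorname{Cone}(e,C)$. Uniqueness follows because any cone with vertex $e$ over a rational normal cubic $C'\subset\mathbb{P}^{3}$ containing $R$ must project $R$ into $C'$, forcing $C'\supseteq C$, and comparing degrees gives $C' = C$.

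The step I expect to require the most care is uniqueness in the case $n=0$. The proposition on $(0,2)$-divisors ensures $Y$ is unique only among cubic Segre products realizing $R$ as a $(0,2)$-divisor, while the proposition on $(2,1)$-divisors describes an additional family of Segre products in which $R\cong\mathbb{F}_{0}$ can sit as a $(2,1)$-divisor, parametrized by an open set of $|\mathcal{O}_{R}(0,1)|^{*}$. To close this gap I would rely on the intrinsicness of the pencil of conics on $R$ used to build $T$: any cubic Segre product $Y''\supset R$ has its $\mathbb{P}^{2}$-rulings cutting out a pencil of conics on $R$, and since $R\cong\mathbb{F}_{0}$ carries only one such pencil in the quartic embedding, the rulings of $Y''$ must be the planes $T_{c}$, forcing $Y''=T$. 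Verifying that no $(2,1)$-realization yields a genuinely distinct $3$-fold in $\mathbb{P}^{5}$ is the crux of the argument.
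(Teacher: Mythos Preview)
The paper does not supply a proof of this theorem; it is presented as an immediate consequence of the two preceding propositions and the explicit construction of $T$. Your treatment of the case $n=2$ via projection from the line $e$ is correct and in fact supplies more detail than the paper does.

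The genuine problem is your uniqueness argument for $n=0$. Your key claim, that ``any cubic Segre product $Y''\supset R$ has its $\mathbb{P}^{2}$-rulings cutting out a pencil of conics on $R$'', is false. When $R\cong\mathbb{F}_0$ sits in $Y''=\mathbb{P}^1\times\mathbb{P}^2$ as a $(2,1)$-divisor, the fibre $\{x\}\times\mathbb{P}^2$ meets $R$ in a curve of class $\mathcal{O}_R(1,0)\cong\mathcal{O}_R(f)$, which is a \emph{line}, not a conic (a $(2,1)$-equation becomes linear in the $\mathbb{P}^2$-coordinates once the $\mathbb{P}^1$-coordinate is fixed). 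So your reduction to the uniqueness of the smooth conic pencil on $R$ breaks down.

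More to the point, the literal statement you are trying to prove is not true: the paper's own second proposition shows that $R\cong\mathbb{F}_0$ lies in a $3$-parameter family of cubic Segre products as a $(2,1)$-divisor, parametrized by an open set of $|\mathcal{O}_R(0,1)|^{*}\cong\mathbb{P}^3$. These are genuinely different threefolds in $\mathbb{P}^5$ from the $(0,2)$-Segre product $T$, since their plane rulings trace the line pencil $|f|$ on $R$ rather than the conic pencil. The theorem should therefore be read as identifying the threefold $T$ built from the conic pencil: for $n=0$ this $T$ is the cubic Segre product of the first proposition (the unique one in which $R$ appears as a $(0,2)$-divisor), and for $n=2$ it is the cone over a twisted cubic. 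That is all the paper needs for the subsequent chain of embeddings $\mathbb{P}^1\times\mathbb{P}^1\hookrightarrow\mathbb{P}^1\times\mathbb{P}^2\hookrightarrow\mathbb{P}^5$; absolute uniqueness among all Segre cubic products is neither asserted carefully nor required.
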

In what follows it will be enough to assume $ n=0 $. We have a chain of embeddings
\begin{center}
	\begin{tikzpicture}
	\node (1) at (0,0) {$\mathbb{P}^{1}\times\mathbb{P}^{1}$};
	\node (2) at (4,0) {$\mathbb{P}^{1}\times\mathbb{P}^{2}$};
	\node (3) at (7.5,0) {$\mathbb{P}^{5}$};
	\path[->]
	(1)  edge node[above]{$\left(\begin{smallmatrix}
		\operatorname{id}_{\mathbb{P}^{1}} & 0\\
		0 & \left| \mathcal{O}(2) \right|
		\end{smallmatrix}\right)$} (2)
	(2) edge node[above]{$|\mathcal{O}(1,1)|$} (3);
	\end{tikzpicture}
\end{center}
Let $F\in H^{0}\left( \mathcal{O}_{\mathbb{P}^{1}\times \mathbb{P}^{2}}(0,2) \right) $ the defining polynomial of $ R $ in $ T $. Then the restriction map
\[
\pi:H^{0}\left(\mathcal{O}_{\mathbb{P}^{5}}(3) \right)\to 
H^{0}\left(	\mathcal{O}_{\mathbb{P}^{1}\times\mathbb{P}^{2}}(3,3)	\right)
\]
has the property that $ \pi(H^{0}(\mathcal{I}_{R}(3)))=f\cdot H^{0}(\mathcal{O}_{\mathbb{P}^{1}\times\mathbb{P}^{2}}(3,1)) $.
The restriction
\[
H^{0}(\mathcal{O}_{\mathbb{P}^{1}\times\mathbb{P}^{2}}(3,1))\to
H^{0}(\mathcal{O}_{\mathbb{P}^{1}\times\mathbb{P}^{1}}(3,2))
\]
is an isomorphism of vector spaces. Consequently there is an induced homomorphism
\[
H^{0}(\mathcal{I}_{R}(3))\to H^{0}( \mathcal{O}_{\mathbb{P^{1}\times\mathbb{P}^{1}}}(3,2) )
\]
which induces a linear projection.
	\[
\mathfrak{C}_{R}\to \left|\mathcal{O}_{\mathbb{P}^{1}\times\mathbb{P}^{1}}(3,2) \right|
\]
We want this map to descend to a $ \mathbb{P}^{16} $-bundle
\[
\tilde{\mathcal{C}}_{14}\to \operatorname{Pic}_{3,2}
\]
Recall some general fact about GIT.
\begin{definition}
	Let $ X $ be and algebraic variety and let $ \mathcal{F} $ be a coherent sheaf on the semistable locus $ X^{ss}$. Let $ G $ be a reductive algebraic group acting on $ X $. Then $ \mathcal{F} $ is said to descend to $ X/\hspace{-4pt}/G $ if there is a coherent sheaf $ \overline{\mathcal{F}} $ on $ X/\hspace{-4pt}/G $ whose pullback under the quotient map $ X^{ss}\to X/\hspace{-4pt}/G $ is the original sheaf $ \mathcal{F} $.
\end{definition}
If $ \mathcal{F} $ is a vector bundle the following result gives a criteria for the descending:
\begin{theorem}[Kempf]
Let $ X $ be a quasi-projective scheme over an algebraically closed field $\kappa $ of characteristic zero, and let $ G $ be a reductive algebraic group defined over $ \kappa $ which acts on $ X $ with a fixed choice of  linearization $ H $. Let $ E $ be a $ G $-vector bundle on $ X^{ss} $. Then $ E $ 
descends to $ X/\hspace{-4pt}/G $ if and only if for every closed point $ x $ of $ X^{ss} $ such that the orbit $ G\cdot x $ is closed in $ X^{ss} $, the stabilizer of $ x $ in $ G $ acts trivially on the fiber $ E_{x} $ of $ E $ at $ x $.
\label{Kempf}
\end{theorem}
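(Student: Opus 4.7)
The plan is to establish the two implications separately, with the forward direction by inspection and the reverse by Luna's étale slice theorem.

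For necessity, assume $E = \pi^{*}\bar{E}$ for the quotient $\pi \colon X^{ss} \to Y := X/\!\!/G$. Then for every $x \in X^{ss}$ the fibre $E_{x}$ is canonically identified with $\bar{E}_{\pi(x)}$, which carries no $G$-action, so every element of $G_{x}$ acts trivially on $E_{x}$. This yields the stated condition in a stronger form, namely at every point of $X^{ss}$ rather than only those with closed orbit.

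For sufficiency, the natural candidate is $\bar{E} := (\pi_{*}E)^{G}$ and one must show that the natural map $\pi^{*}\bar{E} \to E$ is an isomorphism. I would reduce to a local question around a closed point of $Y$, which corresponds to a closed $G$-orbit $G \cdot x \subset X^{ss}$. By Matsushima's theorem $H := G_{x}$ is reductive, and Luna's étale slice theorem produces an affine $H$-invariant locally closed subscheme $S \ni x$ such that
\[
G \times^{H} S \longrightarrow X^{ss}
\]
is strongly étale and descends to an étale neighborhood $S /\!\!/ H \to Y$ of $\pi(x)$. Since $G$-equivariant bundles on $G \times^{H} S$ correspond to $H$-equivariant bundles on $S$, the descent of $E$ near $\pi(x)$ is converted into the descent of the $H$-equivariant bundle $E|_{S}$ to $S /\!\!/ H$.

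On $S = \operatorname{Spec}(A)$, write $E|_{S}$ as a finitely generated projective $A$-module $M$ with compatible $H$-action. The candidate descent is the $A^{H}$-module $M^{H}$, and a direct computation shows that $M^{H} \otimes_{A^{H}} A \to M$ is an isomorphism as soon as $H$ acts trivially on the fibre of $M$ at every closed $H$-orbit of $S$. The hypothesis supplies triviality at $x$, and the Luna correspondence matches closed $H$-orbits on $S$ with closed $G$-orbits in $X^{ss}$ near $G \cdot x$, so the hypothesis propagates to every closed $H$-orbit of $S$; reductivity of $H$ makes the isotypic decomposition of $M$ behave well in families and lets one read off triviality from fibrewise data. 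Finally, the local descents on different slices are glued via faithfully flat étale descent for coherent sheaves on $Y$. The main obstacle, in my view, is precisely this gluing step together with the propagation of the stabilizer condition from a single point to an entire Luna slice; both rest on using reductivity of $G$ and of its stabilizers carefully, and on tracking the canonicity of $M^{H}$ so that the local models on different slices automatically cohere.
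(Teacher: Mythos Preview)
The paper does not prove this statement at all: its entire proof is the single line ``See \cite{Drezet-Narasimhan}.'' So there is no in-paper argument to compare against; the theorem is quoted as a black box.

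Your sketch is the standard route and is essentially the one taken in the cited reference of Drezet and Narasimhan: necessity is immediate, and sufficiency is obtained by applying Luna's \'etale slice theorem to reduce locally on $Y=X/\!\!/G$ to the descent of an $H$-equivariant projective module on an affine slice $S$ with $H=G_x$ reductive, then showing $M^H\otimes_{A^H}A\to M$ is an isomorphism and gluing by \'etale descent. The one place I would tighten is the passage ``reductivity of $H$ makes the isotypic decomposition of $M$ behave well in families and lets one read off triviality from fibrewise data'': this is where the actual work lies, and as written it is an assertion rather than an argument. Concretely, one uses that for a reductive $H$ the functor $(-)^H$ is exact on rational $H$-modules (Reynolds operator), so that base change $M^H\otimes_{A^H}\kappa(y)\to (M\otimes_A\kappa(y))^H$ is an isomorphism at each closed point $y$ of $S/\!\!/H$ with closed orbit, and then Nakayama plus flatness of $M$ finishes. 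The propagation and gluing steps are genuinely routine once this is in place, since $(\pi_*E)^G$ is a canonical global candidate and the Luna diagram is cartesian with \'etale horizontal maps. So your proposal is correct in outline and matches the literature; the paper itself simply defers to that literature.
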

\begin{proof}
	See \cite{Drezet-Narasimhan}.
\end{proof}
Recall also a standard result (\cite[12.9]{Hartshorne}):
\begin{theorem}[Grauert]
	Let $ f:X\to Y $ be a projective morphism of noetherian schemes, and let $ \mathcal{F} $ be a coherent sheaf on $ X $, flat over $ Y $. Suppose furthermore that $ Y $ is integral and that for some $ i $, the function $ h^{i}(X_y,\mathcal{F}_y)  $ is constant on $ Y $. Then $ R^{i}\mathcal{F} $ is locally free on $ Y $ and the natural map 
	\[
	R^{i}f_{*}(\mathcal{F})\otimes k(y)\to H^{i}(X_{y},\mathcal{F}_{y})
	\]
	is an isomorphism.
\end{theorem}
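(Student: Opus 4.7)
The plan is to follow the Grothendieck--Mumford approach, realizing the cohomology of $\mathcal{F}$ via a bounded complex of free modules. Since the statement is local on $Y$, I would first pass to an affine open $\operatorname{Spec} A\subset Y$. The central input, which I would invoke from \cite[III.12.2]{Hartshorne}, is the existence of a bounded complex $K^\bullet = (0\to K^0\to\cdots\to K^n\to 0)$ of finitely generated free $A$-modules together with functorial isomorphisms $H^i(K^\bullet\otimes_A M)\cong H^i(X\times_Y\operatorname{Spec} A,\mathcal{F}\otimes_A M)$ for every $A$-module $M$. This converts the geometric statement into a linear-algebra statement about the cohomology of $K^\bullet\otimes_A M$ as $M$ varies.

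Let $\phi^i(y)$ denote the natural base-change map $R^if_*\mathcal{F}\otimes k(y)\to H^i(X_y,\mathcal{F}_y)$. The proof then rests on two algebraic lemmas extracted from $K^\bullet$: (i) if $\phi^i$ is surjective at $y_0$, then it is an isomorphism in a neighborhood of $y_0$; and (ii) granted (i), the map $\phi^{i-1}(y_0)$ is surjective if and only if $R^if_*\mathcal{F}$ is locally free near $y_0$. Both statements are proved by tracking how the differentials of $K^\bullet$ behave after base change to $k(y)$, and both are classical (they are the content of \cite[III.12.10--12.11]{Hartshorne}).

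To finish, I would perform descending induction on $i$, starting from $i$ beyond the cohomological dimension, where all the sheaves vanish trivially. At each step, constancy of $h^i(X_y,\mathcal{F}_y)$ combined with upper semi-continuity of $\dim R^if_*\mathcal{F}\otimes k(y)$ forces $\phi^i$ to be surjective --- hence an isomorphism by (i) --- and makes $R^if_*\mathcal{F}\otimes k(y)$ have locally constant rank, so $R^if_*\mathcal{F}$ is locally free on a neighborhood. The main obstacle is not conceptual but organizational: one must arrange the induction so that the hypotheses of (i) and (ii) propagate correctly between adjacent degrees, and carefully deduce local freeness in degree $i$ from the surjectivity of $\phi^i$ together with constancy of its target. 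Once $K^\bullet$ and the two lemmas are in place, the constancy hypothesis matches exactly what is needed to drive the induction to completion.
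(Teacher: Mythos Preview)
The paper does not give its own proof of this statement; it merely quotes the result and refers to \cite[12.9]{Hartshorne}. So there is nothing in the paper to compare your argument against beyond that citation.

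That said, your outline has a genuine gap. The hypothesis is that $h^i(X_y,\mathcal{F}_y)$ is constant for \emph{one fixed} index $i$, not for all $j\ge i$. Your descending induction starts from $j\gg 0$ where everything vanishes, and at each step uses lemma~(ii) to say that $\phi^{j-1}$ is surjective iff $R^jf_*\mathcal{F}$ is locally free. But for $i<j$ below the cohomological dimension you have no constancy assumption on $h^j$, hence no way to conclude that $R^jf_*\mathcal{F}$ is locally free, and the induction stalls before it ever reaches the degree $i$ you care about. The sentence ``one must arrange the induction so that the hypotheses of (i) and (ii) propagate correctly between adjacent degrees'' is exactly where the argument breaks: with constancy only at a single $i$, they do not propagate.

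The proof in \cite[III.12.9]{Hartshorne} avoids this by working directly at degree $i$ with the complex $K^\bullet$. Setting $W^i=\operatorname{coker}(d^{i-1}\colon K^{i-1}\to K^i)$, right exactness of $-\otimes k(y)$ gives $W^i\otimes k(y)=\operatorname{coker}\bigl(d^{i-1}\otimes k(y)\bigr)$, so
\[
H^i(X_y,\mathcal{F}_y)=\ker\bigl(W^i\otimes k(y)\to K^{i+1}\otimes k(y)\bigr).
\]
Thus $h^i(y)=\dim\bigl(W^i\otimes k(y)\bigr)-\operatorname{rk}\bigl(W^i\otimes k(y)\to K^{i+1}\otimes k(y)\bigr)$, where the first term is upper semicontinuous and the second lower semicontinuous. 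Constancy of $h^i$ on the integral base $Y$ forces both to be constant, so $W^i$ is locally free and the map $W^i\to K^{i+1}$ has constant rank; its kernel, which is $R^if_*\mathcal{F}$, is then locally free and commutes with base change. No induction through other degrees is needed.
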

The Kempf and Grauert theorems are the key ingredients in the proof of the next proposition. A similar argument was used by Shepherd-Barron in \cite[6]{Shepherd-Barron} to prove the rationality of $ \mathcal{M}_{6} $.
\begin{proposition}
	$ \tilde{\mathcal{C}}_{14} $ is birational to $ \mathbb{P}^{16}\times \mathbb{Pic}_{3,2} $.
\end{proposition}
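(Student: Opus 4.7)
The idea is to realize the linear projection $\mathfrak{C}_R \dashrightarrow |\mathcal{O}_{\mathbb{P}^1\times\mathbb{P}^1}(3,2)|$ constructed above as the projectivization of a $G_R$-equivariant vector bundle $\mathcal{E}$ on the base, descend $\mathcal{E}$ by Kempf's criterion (Theorem \ref{Kempf}) to a vector bundle $\bar{\mathcal{E}}$ on (a birational model of) $\operatorname{Pic}_{3,2}$, and conclude using the elementary fact that the projectivization of a genuine vector bundle on an integral base is Zariski-locally trivial, hence birational to a product.

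The first step is to package the construction as a short exact sequence. Writing $V := H^{0}(\mathcal{I}_{R}(3))$, $W := H^{0}(\mathcal{O}_{\mathbb{P}^{1}\times\mathbb{P}^{1}}(3,2))$, and $\pi \colon V \twoheadrightarrow W$ the surjection shown to exist above (with $\dim V = 28$, $\dim W = 12$, and $K := \ker \pi$ of dimension $16$), pulling back the tautological inclusion $\mathcal{O}(-1) \hookrightarrow W \otimes \mathcal{O}$ along $\pi$ yields
\[
0 \to K \otimes \mathcal{O}_{\mathbb{P}(W)} \to \mathcal{E} \to \mathcal{O}_{\mathbb{P}(W)}(-1) \to 0,
\]
a rank-$17$ vector bundle on $\mathbb{P}(W) = |\mathcal{O}(3,2)|$. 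Its projectivization $\mathbb{P}(\mathcal{E})$ is a $\mathbb{P}^{16}$-bundle over $|\mathcal{O}(3,2)|$ which is birational to $\mathfrak{C}_{R}$ (being the resolution of the linear projection along the center $\mathbb{P}(K)$). Grauert's theorem, applied to the universal family of cubics through $R$ over $|\mathcal{O}(3,2)|$, is a conceptual alternative for proving local freeness.

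Next I would promote $\mathcal{E}$ to an object in the $G_{R}$-equivariant category. Since the polarization $\mathcal{O}_{R}(1,2)$ distinguishes the two rulings of $R \cong \mathbb{P}^{1}\times\mathbb{P}^{1}$, one identifies $G_{R} \cong \operatorname{PGL}(2)^{2}$, and both $\pi$ and the tautological inclusion $\mathcal{O}(-1) \hookrightarrow W\otimes\mathcal{O}$ are manifestly $G_{R}$-equivariant, so $\mathcal{E}$ inherits a natural $G_{R}$-linearization. Kempf's criterion then lets me descend $\mathcal{E}$ to a rank-$17$ vector bundle $\bar{\mathcal{E}}$ on $|\mathcal{O}(3,2)| /\hspace{-4pt}/ G_{R}$, which is birational to $\operatorname{Pic}_{3,2}$ by the GIT description recalled earlier. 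For the birational conclusion it is enough to verify the stabilizer hypothesis on a dense open: for a general $(C, \mathcal{L})$ the stabilizer in $\operatorname{PGL}(2)^{2}$ is trivial (no non-trivial projective symmetry of the ambient $\mathbb{P}^{1}\times\mathbb{P}^{1}$ preserves a generic bidegree-$(3,2)$ curve together with its line bundle $\mathcal{O}(0,1)$), hence acts trivially on any fiber of $\mathcal{E}$. Having obtained $\tilde{\mathcal{C}}_{14} \sim \mathbb{P}(\bar{\mathcal{E}})$ birationally, and since $\bar{\mathcal{E}}$ is a genuine vector bundle (not merely a Brauer--Severi variety), $\mathbb{P}(\bar{\mathcal{E}})$ is Zariski-locally trivial, so it is birational to $\mathbb{P}^{16} \times \operatorname{Pic}_{3,2}$.

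The main technical obstacle I anticipate is the Kempf step: one must check with some care that the stabilizer of a generic closed (semi)stable orbit genuinely acts trivially on the corresponding fiber of $\mathcal{E}$, and if one wished to extend the statement beyond birational equivalence one would have to analyze the action of larger stabilizers along the strictly semistable locus on the extension $K \otimes \mathcal{O} \to \mathcal{E} \to \mathcal{O}(-1)$.
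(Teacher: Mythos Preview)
Your proposal is correct and follows essentially the same route as the paper: resolve the linear projection $\mathbb{P}(V)\dashrightarrow\mathbb{P}(W)$ into a $\mathbb{P}^{16}$-bundle, check it is the projectivization of a genuine $G_R$-equivariant vector bundle, and descend via Kempf using that $\operatorname{PGL}(2)^2$ acts freely on a dense open of $|\mathcal{O}(3,2)|$. The only cosmetic difference is packaging: the paper pulls back $\mathcal{O}_{\mathbb{P}(V)}(1)$ to the blow-up and invokes Grauert to obtain the rank-$17$ pushforward, whereas you write down that same bundle directly as the extension $0\to K\otimes\mathcal{O}\to\mathcal{E}\to\mathcal{O}(-1)\to 0$ and descend $\mathcal{E}$ itself.
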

\begin{proof}
The rational map	
	\[
	{\mathfrak{C}}_{R}\to |\mathcal{O}_{\mathbb{P}^{1}\times\mathbb{P}^{1}}{(3,2)}|
	\]
	is a linear projection
	\[
	\pi:\mathbb{P}(V)\to \mathbb{P}(V')
	\]
	where $ V:=H^{0}(\mathcal{I}_{R}(3)), V':=H^{0}(\mathcal{O}_{\mathbb{P}^{1}\times\mathbb{P}^{1}}(3,2))$. Let $ \alpha:\tilde{\mathbb{P}} \to \mathbb{P}(V)$ be the blow-up a of the base locus of the projection and $ {\tilde{\pi}:=\alpha\circ\pi} $. Let $ {\mathcal{L} :=\alpha^{*}\mathcal{O}_{\mathbb{P}(V)}(1)}$. We have that $ \operatorname{PGL}(2)^{2} $ acts freely on a open subvariety of $ \mathbb{P}(V') $, so $ \mathcal{L} $ is $ \operatorname{PGL}(2) $-linearized. It follows from Kempf theorem that it descends to a line bundle on $ \mathbb{P}(V)/\hspace{-4pt}/\operatorname{PGL}(2)^{2} $. Furthermore it restricts to $ \mathcal{O}(1) $ on the fibers of the map
	\[
	\tilde{\mathbb{P}}\to \mathbb{P}(V').
	\]
	From Grauert theorem we have that $ \tilde{\pi}_{*}\mathcal{L} $ is locally free on $ \mathbb{P}(V') $ and the projective fibration $ \alpha $ is isomorphic to $ \mathbb{P}(\tilde{pi}_{*}\mathcal{L}) $. It follows that $ \alpha $ is a projective bundle.
\end{proof}

\section{Stable Rationality of $ \operatorname{Pic}_{32} $}
In this section we show that the projectivization of the pull-back on $ \operatorname{Pic}_{32}$ of the Hodge bundle over $ \mathcal{M}_{2} $ is a rational variety. It follows that $ \operatorname{Pic}_{32} \times \mathbb{P}^{1}$ is rational. Then a fortiori also $ \tilde{\mathcal{C}}_{14} $ is rational. Recall that $ \mathcal{M}_{g} $ is endowed with a sheaf named \textit{Hodge bundle}. Over a suitable non empty open set the Hodge bundle is a rank $ g $ vector bundle $ \Lambda_{g} $ with fibre $ H^0(\omega_C) $ at the moduli point of C, see e.g. \cite{Loo1} or \cite{Harris Morrison}.
\begin{definition}
	The Hodge bundle over $ \operatorname{Pic}_{3,2} $ is the rank-2 vector bundle
	\[
	\Lambda_{3,2}\to\operatorname{Pic}_{3,2}
	\]
	defined as the pullback of $ \Lambda_{2}\to\mathcal{M}_{2} $ under the natural map $ \operatorname{Pic}_{3,2}\to\mathcal{M}_{2} $
\end{definition}

Let $ \mathbb{K}_{g} $ be the projectivization of $ \Lambda_{g} $. Then $ \mathbb{K}_{g} $ fits in the general theory of moduli of abelian differentials, see \cite{FP}. Indeed an open set of it is the coarse moduli space of couples $ (C,K) $ such that $ C $ is a smooth, connected genus $ g $ curve and $ K $ is a smooth canonical divisor of $ C $.
\begin{definition}$ \mathbb{K}_{3,2} $ is the pull-back of $ \mathbb{K}_{2} $ through the natural map $ \operatorname{Pic}_{3,2}\to \mathcal{M}_{2} $.
\end{definition}

In particular it follows that $ \mathbb{K}_{3,2} $ represents the coarse moduli space of triples $ (C,\mathcal{L},K) $. let us consider the projection map
\[
p:\mathbb{K}_{3,2}\to\operatorname{Pic}_{3,2}
\]
then $ p $ is a $ \mathbb{P}^{1} $-bundle over an open set of $ \operatorname{Pic}_{3,2} $. Its fibre over the moduli point of $ (C,\mathcal{L}) $ in $ \operatorname{Pic}_{3,2} $ is $ |\omega_{C}| $. Now it is not difficult to construct a useful family of triples $ (C,\mathcal{L},K) $ dominating $ \mathbb{K}_{3,2} $ via the moduli map.

For a general triple $ (C,\mathcal{L},K) $ we can assume that $ \mathcal{L} $ is globally generated and that $ K $ consists of two distinct points $ K=o_{1} + o_{2}$ with $ o_{1}\neq o_{2} $. Let $ p:C\to\mathbb{P}^{1} $ and $ q:=C\to\mathbb{P}^{1} $ be the morphisms respectively defined by $ \omega_{C} $ and $ \mathcal{L} $. Then $ p\times q $ defines an embedding
\[
C\subset\mathbb{P}^{1}\times\mathbb{P}^{1}
\]
with two marked points, that are the images of $ o_{1} $ and $ o_{2} $. With some abuse of notation, we still denote them by $ o_{1},o_{2} $. In particular $ C $ is a smooth element of the linear system $ |\mathcal{O}_{\mathbb{P}^{1}\times\mathbb{P}^{1}}(3,2)| $ and contains $ \{o_{1},o_{2}\} $. et $ \mathcal{I} $ be the ideal sheaf of $ \{o_{1},o_{2}\} $ in $ \mathbb{P}^{1}\times\mathbb{P}^{1} $ and let
\begin{center}
	\begin{tikzpicture}
	\node (1) at (0,0) {$|\mathcal{I}(3,2)|$};
	\node (2) at (3,0) {$\mathbb{K}_{3,2}$};
	\node (3) at (0,-0.5) {$C$};
	\node (4) at (3,-0.5) {$(C,\mathcal{O}_{C}(0,1),o_{1}+o_{2})$};
	\path[->]
	(1) 	edge node[above]{$m$}	(2);
	\path[|->]
	(3) edge (4);
	\end{tikzpicture}
\end{center}
be the natural moduli map. Since $ (C,\mathcal{L},K) $ defines a general point of $ \mathbb{K}_{3,2} $ the next property is immediate.
\begin{proposition}
	$ m:|\mathcal{I}(3,2)|\dashrightarrow \mathbb{K}_{3,2} $ is dominant.
\end{proposition}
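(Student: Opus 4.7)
The strategy is to invert the construction that immediately precedes the statement. Given a general triple $(C, \mathcal{L}, K) \in \mathbb{K}_{3,2}$, the aim is to produce a $(3,2)$-curve in $|\mathcal{I}(3,2)|$ whose image under $m$ recovers $(C, \mathcal{L}, K)$. The preceding paragraph already furnishes the main data: for $(C, \mathcal{L}, K)$ generic, $\mathcal{L}$ is globally generated (an open condition on $\operatorname{Pic}^{3}(C)$), $K = o_{1}' + o_{2}'$ consists of two distinct points (an open condition on the pencil $|\omega_{C}|$), and the morphisms $p$ and $q$ together yield a morphism $p\times q \colon C \to \mathbb{P}^{1}\times\mathbb{P}^{1}$.

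The key step is to verify that $p\times q$ is a closed embedding for a generic choice of $\mathcal{L}$. Failure of injectivity forces $q(a) = q(\sigma(a))$ for some $a \in C$, with $\sigma$ the hyperelliptic involution, i.e.\ $\mathcal{L}$ would have to be pulled back from $C/\langle\sigma\rangle = \mathbb{P}^{1}$; failure on tangent spaces forces $q$ to ramify at one of the six Weierstrass points. Both conditions cut out proper closed subsets of $\operatorname{Pic}^{3}(C)$, so for generic $\mathcal{L}$ the map $p\times q$ embeds $C$ as a smooth curve $\widetilde{C}$. Since $\deg(p) = 2$ and $\deg(q) = 3$, the bidegree of $\widetilde{C}$ is $(3,2)$, and adjunction gives $\omega_{\widetilde{C}} \cong \mathcal{O}_{\widetilde{C}}(1,0)$; in particular $\mathcal{O}_{\widetilde{C}}(0,1) \cong \mathcal{L}$ and the canonical divisor on $\widetilde{C}$ is cut out by a fiber of the first projection $\pi_{1}$.

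To finish, I would match the marked divisor. Because $K \in |\omega_{C}|$, the images $(p\times q)(o_{1}'), (p\times q)(o_{2}')$ are two distinct points sharing the same first coordinate, i.e.\ they lie on a common fiber of $\pi_{1}$. The fixed points $o_{1}, o_{2}$ defining $\mathcal{I}$ must themselves lie on such a common fiber (otherwise $o_{1}+o_{2}$ could never be canonical on a $(3,2)$-curve through them), and $\operatorname{PGL}_{2}\times\operatorname{PGL}_{2}$ acts transitively on ordered pairs of distinct points on a fiber of $\pi_{1}$. Applying such an automorphism translates $\widetilde{C}$ to a curve $C' \in |\mathcal{I}(3,2)|$ with $m(C') \cong (C, \mathcal{L}, K)$, proving dominance. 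The dimension count $\dim |\mathcal{I}(3,2)| = 9 = 6 + 3 = \dim \mathbb{K}_{3,2} + \dim \operatorname{Stab}_{\operatorname{PGL}_{2}^{2}}(o_{1}, o_{2})$ is consistent with generic fibers having the expected dimension. The only genuine obstacle in the argument is the closed-embedding property of $p\times q$; everything else follows by bookkeeping from the construction already set up.
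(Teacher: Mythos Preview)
Your approach is exactly the paper's: the proposition is declared ``immediate'' there because the paragraph preceding it already asserts that a general triple $(C,\mathcal{L},K)$ yields, via $p\times q$, a smooth $(3,2)$-curve through the two marked points, and you are simply supplying the details behind that assertion. One small inaccuracy: failure of injectivity does not force $\mathcal{L}$ to be pulled back from $C/\langle\sigma\rangle$ (a degree-$3$ bundle cannot be such a pullback); the correct conclusion from $q(a)=q(\sigma(a))$ is that $\mathcal{L}\cong\omega_{C}(c)$ for some point $c$, which is precisely the non--globally-generated locus you already excluded, so your argument still goes through.
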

The following result gives a more concrete description of $ \mathbb{K}_{3,2} $.
\begin{proposition}
	Let $ m:|\mathcal{I}(3,2)|\dashrightarrow \mathbb{K}_{3,2} $ as above. Then
	\[
		m(C)=m(C') \iff \exists\sigma\in\operatorname{Stab}_{\operatorname{PGL}(2)^2}(\{o_{1},o_{2}\})\text{ s.t. }\sigma(C)=C'
	\]
\end{proposition}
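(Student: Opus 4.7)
The direction $(\Leftarrow)$ is immediate: any $\sigma\in\operatorname{PGL}(2)^{2}=\operatorname{Aut}(\mathbb{P}^{1}\times\mathbb{P}^{1})$ with $\sigma(C)=C'$ is an abstract isomorphism of curves that respects the bi-degree, so it pulls $\mathcal{O}_{C'}(0,1)$ back to $\mathcal{O}_{C}(0,1)$ and $\omega_{C'}\cong\mathcal{O}_{C'}(1,0)$ back to $\omega_{C}\cong\mathcal{O}_{C}(1,0)$; the extra hypothesis $\sigma(\{o_{1},o_{2}\})=\{o_{1},o_{2}\}$ then matches the canonical divisors and yields $m(C)=m(C')$.

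For $(\Rightarrow)$, the plan is, starting from an abstract isomorphism of triples
\[
\phi \colon (C,\mathcal{O}_{C}(0,1), o_{1}+o_{2}) \;\xrightarrow{\,\sim\,}\; (C',\mathcal{O}_{C'}(0,1), o_{1}+o_{2}),
\]
to lift $\phi$ to an automorphism $\sigma$ of $\mathbb{P}^{1}\times\mathbb{P}^{1}$ by using that each factor projection, restricted to $C$ or $C'$, is the morphism defined by one of the two pencils in play. Adjunction gives $\omega_{C}\cong\mathcal{O}_{C}(1,0)$ with $h^{0}=g=2$, so $\pi_{1}|_{C}$ is the morphism associated to $|\omega_{C}|$, and the short exact sequence $0\to\mathcal{O}_{\mathbb{P}^{1}\times\mathbb{P}^{1}}(-3,-1)\to\mathcal{O}(0,1)\to\mathcal{O}_{C}(0,1)\to 0$ yields $h^{0}(\mathcal{O}_{C}(0,1))=2$, so $\pi_{2}|_{C}$ is the morphism defined by $|\mathcal{L}|$; analogously for $C'$. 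Since $\phi$ preserves $\omega$ intrinsically and preserves $\mathcal{L}$ by hypothesis, the pullbacks $\phi^{*}$ on $H^{0}(\omega_{\bullet})$ and on $H^{0}(\mathcal{L}_{\bullet})$ are linear isomorphisms; writing them in the bases coming from the ambient inclusions produces $\sigma_{1},\sigma_{2}\in\operatorname{PGL}(2)$. A direct evaluation-at-a-point check then gives that $\sigma:=(\sigma_{1},\sigma_{2})$ satisfies $\sigma|_{C}=\phi$; in particular $\sigma(C)=C'$, and $\sigma(o_{i})=\phi(o_{i})\in\{o_{1},o_{2}\}$ because $\phi$ matches canonical divisors, whence $\sigma\in\operatorname{Stab}_{\operatorname{PGL}(2)^{2}}(\{o_{1},o_{2}\})$.

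The only step requiring any real care is the last identification $\sigma|_{C}=\phi$: one has to fix bases consistently on the two sides and invoke the standard duality between a globally generated pencil and its evaluation-at-a-point description to conclude that the projectivized pullback on global sections really recovers the restricted projection. All the hypotheses that make this work — global generation of $\mathcal{L}$, the distinctness of $o_{1}\neq o_{2}$, and the $h^{0}$ computations above — hold on the dense open over which the moduli map $m$ is defined, so no degeneration obstructs the argument.
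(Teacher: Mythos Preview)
Your proof is correct and follows essentially the same approach as the paper's own argument: the key observation in both is that an abstract isomorphism $\phi\colon C\to C'$ extends to an element of $\operatorname{Aut}(\mathbb{P}^{1})^{2}$ precisely when $\phi^{*}\mathcal{O}_{C'}(1,0)\cong\mathcal{O}_{C}(1,0)$ and $\phi^{*}\mathcal{O}_{C'}(0,1)\cong\mathcal{O}_{C}(0,1)$, conditions automatically satisfied by an isomorphism of triples since $\mathcal{O}(1,0)$ restricts to the canonical bundle and $\mathcal{O}(0,1)$ to $\mathcal{L}$. The paper states this in one line, while you spell out the mechanism (identifying the projections with the maps defined by $|\omega_{C}|$ and $|\mathcal{L}|$, and reading $\sigma_{1},\sigma_{2}$ off the induced maps on global sections), but the substance is the same.
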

\begin{proof}
	The isomorphisms $\sigma:C\to C' $ which are restrictions of an element of $ \operatorname{Aut(\mathbb{P}^{1})}^{2} $ are determined by the conditions
	\[
	\sigma^{*}\mathcal{O}_{C'}(1,0)=\mathcal{O}_{C}(1,0),\;\;\sigma^{*}\mathcal{O}_{C'}(0,1)=\sigma^{*}\mathcal{O}_{C}(0,1)
	\]
	Imposing that $ \sigma^*(o_{1}+o_{2})=o_{1}+o_{2}$ means exactly that $\sigma\in\operatorname{Stab}_{\operatorname{PGL}(2)^2}(\{o_{1},o_{2}\})$.
\end{proof}
\begin{observation}
Since it is possible to move any $\{o_{1},o_{2}\}\subset D\in |\mathcal{O}_{\mathbb{P}^{1} \times \mathbb{P}^{1}}(1,0)|$ to $ {\{([1:0],[1:0]),([1:0],[0:1])\}} $ through an element of $ \operatorname{PGL}(2)^{2} $, $ \mathbb{K}_{3,2} $ can be described as the quotient of
\[
\{
C\in 
	\left| 
	\mathcal{O}_{\mathbb{P}^{1}\times\mathbb{P}^{1}}(3,2) 
	\right|
: ([1:0],[0:1]),([1:0],[1:0])\in C
\}
\]
modulo the equivalence relation
\begin{eqnarray*}
&C\sim C'&\\
& \iff &\\ &\exists\sigma=(\sigma_{1},\sigma_{2}):\sigma(C)=C,\sigma_{1}([1:0])=[1:0],&\\
&\sigma_{2}(\{[1:0],[0:1]\})=\{ [1:0],[0:1] \}&
\end{eqnarray*}
\label{observation relative canonical}
\end{observation}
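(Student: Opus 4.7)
The plan is to deduce the claimed description of $\mathbb{K}_{3,2}$ as a quotient by combining the preceding proposition (which identifies the fibers of $m$ with orbits of $\operatorname{Stab}_{\operatorname{PGL}(2)^{2}}(\{o_{1},o_{2}\})$) with a normalization of the marked divisor $\{o_{1},o_{2}\}$ via the $\operatorname{PGL}(2)^{2}$-action on $\mathbb{P}^{1}\times\mathbb{P}^{1}$.

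First I would verify the transitivity statement. Since a general triple $(C,\mathcal{L},K)$ satisfies $\omega_{C}\cong \mathcal{O}_{C}(1,0)$, the canonical divisor $K=o_{1}+o_{2}$ is cut out on $C$ by a section of $\mathcal{O}_{\mathbb{P}^{1}\times\mathbb{P}^{1}}(1,0)$; equivalently, $\{o_{1},o_{2}\}$ is contained in a unique ruling $D=\{x_{0}\}\times\mathbb{P}^{1}\in |\mathcal{O}_{\mathbb{P}^{1}\times\mathbb{P}^{1}}(1,0)|$. Acting by $\operatorname{PGL}(2)$ on the first factor, I can send $x_{0}$ to $[1:0]$, thereby moving $D$ to $\{[1:0]\}\times\mathbb{P}^{1}$; then acting by $\operatorname{PGL}(2)$ on the second factor (which acts $3$-transitively on $\mathbb{P}^{1}$), I can send the two distinct points $o_{1},o_{2}$, viewed now as points of $\mathbb{P}^{1}$, to $[1:0]$ and $[0:1]$. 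This gives the asserted normalization of $\{o_{1},o_{2}\}$ to $\{([1:0],[1:0]),([1:0],[0:1])\}$.

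Next I would use this normalization to replace the full dominant rational map $m:|\mathcal{I}(3,2)|\dashrightarrow \mathbb{K}_{3,2}$ (for varying $\{o_{1},o_{2}\}$) by its restriction to the sublocus
\[
\mathcal{S}:=\{C\in |\mathcal{O}_{\mathbb{P}^{1}\times\mathbb{P}^{1}}(3,2)| : ([1:0],[1:0]),([1:0],[0:1])\in C\},
\]
which is still dominant: every $\mathbb{K}_{3,2}$-point admits a representative in $\mathcal{S}$ by the previous paragraph. Invoking the preceding proposition with $\{o_{1},o_{2}\}=\{([1:0],[1:0]),([1:0],[0:1])\}$ gives that two points $C,C'\in \mathcal{S}$ have the same image in $\mathbb{K}_{3,2}$ if and only if some $\sigma\in\operatorname{Stab}_{\operatorname{PGL}(2)^{2}}(\{([1:0],[1:0]),([1:0],[0:1])\})$ carries $C$ to $C'$.

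Finally I would unfold this stabilizer condition into the explicit form stated in the observation. If $\sigma=(\sigma_{1},\sigma_{2})\in \operatorname{PGL}(2)^{2}$ stabilizes the given two-point set, then because both points share the first coordinate $[1:0]$ we must have $\sigma_{1}([1:0])=[1:0]$, and because their second coordinates form the set $\{[1:0],[0:1]\}$ we must have $\sigma_{2}(\{[1:0],[0:1]\})=\{[1:0],[0:1]\}$; conversely, any such $\sigma$ fixes the two-point set setwise. This is precisely the equivalence relation displayed in the observation, completing the identification. No step here is genuinely difficult; the only point requiring care is step one, where one must check that the canonical class really does force $\{o_{1},o_{2}\}$ to lie on a single ruling of type $(1,0)$, since this is what makes the normalization to two points on a common fiber possible.
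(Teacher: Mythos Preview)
Your proposal is correct and follows exactly the reasoning the paper has in mind: the observation is stated without proof in the paper, being an immediate consequence of the preceding proposition on the fibers of $m$ together with the obvious transitivity of $\operatorname{PGL}(2)^{2}$ on pairs of distinct points lying on a common $(1,0)$-ruling. Your explicit unpacking of the stabilizer $\operatorname{Stab}_{\operatorname{PGL}(2)^{2}}(\{o_{1},o_{2}\})$ into the conditions on $\sigma_{1}$ and $\sigma_{2}$ is precisely what the paper uses in the subsequent proof of the rationality of $\mathbb{K}_{3,2}$, where this group is written down as the matrix group $G$.
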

We use the characterization of the observation \ref{observation relative canonical} to prove its rationality using an argument of classical GIT.
\begin{proposition}
	$\mathbb{K}_{3,2} $ is birational to $ \mathbb{P}^{6} $.
\end{proposition}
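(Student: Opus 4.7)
My plan is to use the explicit description of $\mathbb{K}_{3,2}$ from Observation \ref{observation relative canonical} and exhibit it as a quotient of affine $6$-space by a small finite group. By that observation, $\mathbb{K}_{3,2}$ is birational to $V/G$, where $V := |\mathcal{I}(3,2)|$ is a linear system of dimension $9$ and $G \subset \operatorname{PGL}(2)^2$ is the stabilizer of $\{o_1, o_2\}$: its connected component is $G^0 = H_1 \times T$ with $H_1 = \operatorname{Stab}_{\operatorname{PGL}(2)}([1:0])$ the affine group (dimension $2$) and $T \subset \operatorname{PGL}(2)$ the diagonal torus (dimension $1$), and $G/G^0 = \langle \iota\rangle$ is generated by the swap $\iota : X_0 \leftrightarrow X_1$. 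Since $\dim G^0 = 3$, the quotient is $6$-dimensional, as expected.

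The core step is to produce a slice for the $G^0$-action. Writing a general $F \in V$ as $F = A(Z)X_0^2 + B(Z)X_0 X_1 + C(Z)X_1^2$ with $A, B, C$ binary cubics, the base-point conditions force $Z_1 \mid A$ and $Z_1 \mid C$, so I parametrize
\[
A = Z_1(\alpha_1 Z_0^2 + \alpha_2 Z_0 Z_1 + \alpha_3 Z_1^2), \quad B = \sum_{i=0}^{3}\beta_i Z_0^{3-i}Z_1^i, \quad C = Z_1(\gamma_1 Z_0^2 + \gamma_2 Z_0 Z_1 + \gamma_3 Z_1^2).
\]
The $4$-dimensional group $G^0 \times \mathbb{G}_m$ (with $\mathbb{G}_m$ the overall rescaling) acts linearly on the affine cone $\mathbb{A}^{10}$ via the substitutions $Z_0 \mapsto aZ_0 + bZ_1$, $X_0 \mapsto sX_0$, $F \mapsto \nu F$. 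A short weight computation shows that on the open set where $\alpha_1, \beta_0, \gamma_1$ are nonzero one can impose the four normalizations $\alpha_1 = \beta_0 = \gamma_1 = 1$ and $\beta_1 = 0$, cutting out a slice $S \cong \mathbb{A}^6$ with coordinates $(\alpha_2, \alpha_3, \beta_2, \beta_3, \gamma_2, \gamma_3)$. The normalization determines the group element uniquely up to a kernel of order $2$, arising from the weight matrix of $\{\alpha_1, \beta_0, \gamma_1\}$ having determinant $-2$; this produces a residual involution $\tau$ on $S$, which a direct computation shows to act as $(\alpha_2, \alpha_3, \beta_2, \beta_3, \gamma_2, \gamma_3) \mapsto (-\alpha_2, \alpha_3, \beta_2, -\beta_3, -\gamma_2, \gamma_3)$.

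Finally, the $\iota$-swap preserves $S$ (the four normalizations are $\iota$-symmetric) and acts by $(\alpha_j, \gamma_j) \leftrightarrow (\gamma_j, \alpha_j)$, fixing the $\beta_j$, so $\mathbb{K}_{3,2}$ is birational to $\mathbb{A}^6 / (\mathbb{Z}/2 \times \mathbb{Z}/2)$ with the two factors generated by $\tau$ and $\iota$. In the coordinates $u_j := \alpha_j + \gamma_j$, $v_j := \alpha_j - \gamma_j$ for $j = 2, 3$, both involutions are diagonal: $\iota$ negates $v_2, v_3$ and $\tau$ negates $u_2, v_2, \beta_3$. The proof is then completed by an iterated sign-quotient computation: taking $\iota$-invariants yields the purely transcendental extension $\mathbb{C}(u_2, u_3, \beta_2, \beta_3, v_2^2, v_2 v_3)$, and then quotienting by $\tau$ (which negates the three variables $u_2, \beta_3, v_2 v_3$ among these) again yields a purely transcendental field of transcendence degree $6$. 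Hence $\mathbb{K}_{3,2}$ is birational to $\mathbb{A}^6$, equivalently to $\mathbb{P}^6$. The main subtlety is the bookkeeping for the residual $\mathbb{Z}/2 \times \mathbb{Z}/2$-action introduced by the slice; once this is pinned down, both the slice construction and the invariant computation reduce to linear algebra and successive sign quotients.
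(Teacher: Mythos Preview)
Your argument is correct and follows essentially the same strategy as the paper: start from Observation~\ref{observation relative canonical}, use the unipotent part of $G^{0}$ to eliminate one coefficient (your $\beta_{1}$, the paper's $C_{111}$), and then handle the remaining torus-plus-swap quotient explicitly. The execution differs only tactically. The paper treats the $3$-torus by exhibiting six monomial invariants $(I_{1},J_{2},J_{3},I_{4},I_{5},I_{6})$ and then symmetrizes $J_{2},J_{3}$ under the swap $Y_{0}\leftrightarrow Y_{1}$; you instead normalize three further coefficients to~$1$, which introduces a residual $\mathbb{Z}/2$ coming from the order-$2$ kernel of the torus weight map, and then compute invariants for the resulting diagonal $(\mathbb{Z}/2)^{2}$-action by two successive sign-quotients. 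Both routes are elementary and reach the same conclusion; your slice method makes the residual finite group slightly larger but keeps every action linear and diagonal, so the final invariant computation is arguably cleaner.
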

\begin{proof}
From the observation \ref{observation relative canonical} $ \mathbb{K}_{3,2}$ can be described as the GIT quotient set of $ (3,2) $-divisors
\[
\left\lbrace
\begin{matrix}
C_{011}X_{0}^{3}Y_{0}Y_{1} + X_{0}^2X_{1}(C_{120}Y_{0}^2+C_{111}Y_{0}Y_{1}+C_{102}Y_{1}^{2})+\\
+X_{0}X_{1}^2(C_{220}Y_{0}^{2}+C_{211}Y_{0}Y_{1}+C_{202}Y_{1}^2)+\\
+X_{1}^3(C_{320}Y_{0}^2+C_{311}Y_{0}Y_{1}+C_{302}Y_{1}^2)=0
\end{matrix}
\right\rbrace
\]
modulo the action of the group $ G\subset \operatorname{PGL}(2)^{2} $ defined by
\[
G:=\operatorname{Stab}(\{o_{1},o_{2}\})=\left\lbrace
\left(
\begin{bmatrix}
* & *\\
0 & *\\
\end{bmatrix},
\begin{bmatrix}
* & 0\\
0 & *\\
\end{bmatrix}
\right)
\right\rbrace
\cup
\left\lbrace
\left(
\begin{bmatrix}
* & *\\
0 & *\\
\end{bmatrix},
\begin{bmatrix}
0 & *\\
* & 0\\
\end{bmatrix}
\right)
\right\rbrace
\]
where $ o_{1}:=([1:0],[1:0]) $, $ o_{2}:=([1:0],[0:1]) $.
For a general element of this set we have that $ C_{011}\neq 0 $. It follows that in the $ G $-orbit of a general element there is an element with $ C_{111}=0 $: just make the transformation $ X_{0}\mapsto X_{0} + \frac{C_{111}}{3C_{011}}X_{1} $. Note that if two elements with $C_{111}=0  $ are in the same $ G $-orbit, then they are necessarily connected by an element of ${ G' \leq G}$, where
\[
G':=\left\lbrace
\left(
\begin{bmatrix}
* & 0\\
0 & *\\
\end{bmatrix},
\begin{bmatrix}
* & 0\\
0 & *\\
\end{bmatrix}
\right)
\right\rbrace
\cup
\left\lbrace
\left(
\begin{bmatrix}
* & 0\\
0 & *\\
\end{bmatrix},
\begin{bmatrix}
0 & *\\
* & 0\\
\end{bmatrix}
\right)
\right\rbrace
\]
and vice versa $ G' $ acts on set of polynomials with $ C_{111}=0 $. So another birational model of $ \mathbb{K}_{3,2} $ is the GIT quotient 
\[
\left\lbrace
\begin{matrix}
C_{011}X_{0}^{3}Y_{0}Y_{1} + X_{0}^2X_{1}(C_{120}Y_{0}^2+C_{102}Y_{1}^{2})+\\
+X_{0}X_{1}^2(C_{220}Y_{0}^{2}+C_{211}Y_{0}Y_{1}+C_{202}Y_{1}^2)+\\
+X_{1}^3(C_{320}Y_{0}^2+C_{311}Y_{0}Y_{1}+C_{302}Y_{1}^2)=0
\end{matrix}
\middle\rbrace \middle/\hspace{-5pt} \right/G'
\]
The same space can be described as the GIT quotient
\[
\left\lbrace
\begin{matrix}
C_{011}Y_{0}Y_{1} + x(C_{120}Y_{0}^2+C_{102}Y_{1}^{2})+\\
+x^2(C_{220}Y_{0}^{2}+C_{211}Y_{0}Y_{1}+C_{202}Y_{1}^2)+\\
+x^3(C_{320}Y_{0}^2+C_{311}Y_{0}Y_{1}+C_{302}Y_{1}^2)
\end{matrix}
\middle\rbrace\middle/\hspace{-5pt}\right/ H
\]
where $ H:={\mathbb{C}^{*}}^{3}\rtimes (\mathbb{Z}/2\mathbb{Z}) $. $ H $ acts in the following way:
\begin{itemize}
	\item ${\mathbb{C}^{*}}^{3} $ acts by multiplying the variables $ x,Y_{0},Y_{1} $ by costants, more precisely:
	\[
		(a,b,c)\cdot (C_{ijk})=(a^{i}b^{j}c^{k}C_{ijk});
	\]
	\item $ \mathbb{Z}/2\mathbb{Z} $ inverts the variables $ Y_{0} $ and $ Y_{1} $, more precisely
	\[
	(1\mod 2) \cdot (C_{ijk})=C_{ikj}.
	\]
\end{itemize}
We find now 6 invariants which completely determines the GIT quotient of the dense open subset
 \[
\left\lbrace
\begin{matrix}
C_{011}\ne 0,C_{120}\ne 0,C_{102}\ne 0,C_{220}\ne 0,C_{211}\ne 0,\\
C_{202}\ne 0,C_{320}\ne 0,C_{311}\ne 0,C_{302}\ne 0 \end{matrix} 
\middle\rbrace\middle/\hspace{-5pt}\right/ H
\]
then giving a birational map to $ \mathbb{C}^{6} $. We first compute the invariants for the action of $ {\mathbb{C}^{*}}^{3}$, then we recover the invariants for the action of $ H $. We use that
\[
X/\hspace{-4pt}/G\cong (X/\hspace{-4pt}/N)/\hspace{-4pt}/(G/N)
\]
in our case $ G=H, N={\mathbb{C}^{*}}^{3}, G/N=\mathbb{Z}/2\mathbb{Z}$.
The action of $ {\mathbb{C}^{*}}^{3} $ is uniquely determined by the following invariants
\begin{eqnarray*}
&(I_{1},J_{2},J_{3},I_{4},I_{5},I_{6})&\\
&=&\\
&\left(
\dfrac{C_{120}C_{102}}{C_{211}C_{011}},\dfrac{C_{011}C_{311}}{C_{220}C_{102}},
\dfrac{C_{302}C_{011}}{C_{211}C_{102}},\dfrac{C_{220}C_{202}}{C_{211}^{2}},
\dfrac{C_{320}C_{302}}{C_{311}^{2}},\dfrac{C_{211}^{3}}{C_{311}^{2}C_{011}}
\right)&
\end{eqnarray*}
It is immediate that they are invariants, we need to check that they uniquely determine an isomorphism class. Suppose that two polynomials (identified with elements of $ {\mathbb{C}^{*}}^{9} $)\\
$ {(C_{011},C_{120},C_{102},C_{220},C_{211},C_{202},C_{320},C_{311},C_{302})} $ and\\
$ {(C_{011}',C_{120}',C_{102}',C_{220}',C_{211}',C_{202}',C_{320}',C_{311}',C_{302}')} $ have the property that
\begin{eqnarray}
(I_{1},J_{2},J_{3},I_{4},I_{5},I_{6})=
(I_{1}',J_{2}',J_{3}',I_{4}',I_{5}',I_{6}')
\label{eq1}
\end{eqnarray}
Acting by $ {\mathbb{C}^{*}}^3 $ on both of them it is possible to make
\begin{eqnarray}
{C_{211}=C_{211}'=1}, &{C_{311}=C_{311}'=1}, &{C_{102}=C_{102}'=1}
\label{eq2}
\end{eqnarray}
in fact it is just needed to choose two triples $ (a,b,c) $ and $ (a',b',c') $ such that
\[
\begin{cases}
a^{2}bc=C_{211}^{-1}\\
a^{3}bc=C_{311}^{-1}\\
ac^{2}=C_{102}^{-1}
\end{cases}\;\;
\begin{cases}
{a'}^{2}b'c'=C_{211}'^{-1}\\
{a'}^{3}b'c'=C_{311}'^{-1}\\
{a'}c'^{2}=C_{102}'^{-1}
\end{cases}\;\;
\]
If the equalities \ref{eq1} and \ref{eq2} hold, then $ C_{ijjk}=C_{ijk}' $, in fact:
\begin{enumerate}
	\item $ {I_{6}=I_{6}'}\iff \dfrac{C_{211}^{3}}{C_{311}^{2}C_{011}}=\dfrac{C_{211}'^{3}}{C_{31}'^{2}C_{011}'}\Rightarrow C_{011}=C_{011}'$;
	\item $ J_{2}=J_{2}'\iff \dfrac{C_{011}C_{311}}{C_{220}C_{102}}=\dfrac{C_{011}'C_{311}'}{C_{220}'C_{102}'}\Rightarrow C_{220}=C_{220}' $;
	\item $ {I_{4}=I_{4}'} \iff \dfrac{C_{220}C_{202}}{C_{211}^{2}}=\dfrac{C_{220}'C_{202}'}{C_{211}'^{2}}\Rightarrow C_{202}=C_{202}'$;
	\item $ {I_{1}=I_{1}'} \iff \dfrac{C_{120}C_{102}}{C_{211}C_{011}}=\dfrac{C_{120}'C_{102}'}{C_{211}'C_{011}'}\Rightarrow C_{120}=C_{120}'$;
	\item  $ {J_{3}=J_{3}'}\iff \dfrac{C_{302}C_{011}}{C_{211}C_{102}}=\dfrac{C_{302}'C_{011}'}{C_{211}'C_{102}'}\Rightarrow C_{302}=C_{320}' $;
	\item $ {I_{5}=I_{5}'} \iff \dfrac{C_{320}C_{302}}{C_{311}^{2}}=\dfrac{C_{320}'C_{302}'}{C_{311}'^{2}}\Rightarrow C_{320}=C_{320}'$.
\end{enumerate}
Note that the invariants $ J_{2},J_{3} $ are not invariant for the action of $ \mathbb{Z}/2\mathbb{Z} $. In fact let $ \iota $ be the involution exchanging the variables $ Y_{0} $ and $ Y_{1} $, then\\ 
\[ \iota(J_{2})=\dfrac{C_{011}C_{311}}{C_{202}C_{120}}=I_{1}^{-1}{J_{2}}^{-1}I_{4}^{-1}I_{6}^{-1}\]
\[ \iota(J_{3})=\dfrac{C_{320}C_{011}}{C_{211}C_{120}}=I_{1}^{-1}{J_{3}}^{-1}I_{5}I_{6}^{-1}\]
It is standard to check that the action of $ \mathbb{Z}/2\mathbb{Z} $ on $ {\mathbb{C}^{*}}^{6} $ given by
\[
\iota(x_{1},x_{2},x_{3},x_{4},x_{5},x_{6})=(x_{1},x_{1}^{-1}x_{2}^{-1}x_{4}^{-1}x_{6}^{-1},x_{1}^{-1}x_{3}^{-1}x_{5}x_{6}^{-1},x_{4},x_{5},x_{6})
\]
is uniquely determined by the invariants
\[
(x_{1},x_{2}+x_{1}^{-1}x_{2}^{-1}x_{4}^{-1}x_{6}^{-1},x_{3}+x_{1}^{-1}x_{3}^{-1}x_{5}x_{6}^{-1},x_{4},x_{5},x_{6})
\]
It follows that the invariants of the action of $ H $ are
\begin{eqnarray*}
	&(I_{1},I_{2},I_{3},I_{4},I_{5},I_{6})&\\
	&=&\\
	&\left( 
	\frac{C_{120}C_{102}}{C_{211}C_{011}},
	\frac{C_{011}C_{311}}{C_{220}C_{102}}+\frac{C_{011}C_{311}}{C_{202}C_{120}},
	\frac{C_{302}C_{011}}{C_{211}C_{102}}+\frac{C_{320}C_{011}}{C_{211}C_{120}},
	\frac{C_{220}C_{202}}{C_{211}^{2}},
	\frac{C_{320}C_{302}}{C_{311}^{2}},\frac{C_{211}^{3}}{C_{311}^{2}C_{011}}
	\right)&
\end{eqnarray*}
which gives a birational correspondence between $ \mathbb{K}_{3,2}$ and $ \mathbb{C}^{6} $.
\end{proof}
\section{Acknowledgements}
This work is part of the author’s PhD thesis under the supervision of Alessandro Verra. The author is very grateful to him for suggesting the problem and some possible strategies and for the accurate review.


\begin{thebibliography}{99}
	
	\bibitem[Dol]{Dolgachev Invariant theory} Dolgachev, I. (2003) Lectures on Invariant Theory. CUP.
	
	\bibitem[DN]{Drezet-Narasimhan}J.-M. Drezet and M. S. Narasimhan.
	 Groupe de Picard des vari\'{e}t\'{e}s de modules de  fibr\'{e}s semi-stables sur les courbes alg\'{e}briques.  \underline{Invent. Math.}, 97(1):53–94, 1989.
	 
	 \bibitem[FP]{FP} G. Farkas and R. Pandharipande, The moduli space of twisted canonical divisors, ArXiv 1508.07940 2015.
	 
	 \bibitem[FV1]{K3 genus 14} G. Farkas and A. Verra, The universal K3 surface of genus 14 via cubic fourfolds. \underline{J. de Math\'{e}matiques Pures et Appliqu\'{e}es} 111 (2018), 120.
	 
	 \bibitem[FV2]{K3 genus 22} G. Farkas and A. Verra, The unirationality of the moduli space of $ K3 $ surfaces of genus 22, arXiv preprint (2019), 117.
	
	 	 
	 \bibitem[GHS]{GHS} V. Gritsenko, K. Hulek and G.K. Sankaran, The Kodaira dimension of the moduli space of K3 surfaces. \underline{Inventiones Mathematicae} 169 (2007), 519–567.
	 
	 \bibitem[Har]{Hartshorne} R. Hartshorne, Algebraic Geometry. \underline{Springer Graduate Texts in Mathematics} 52 (1977).
	 
	 \bibitem[Has1]{Hasset}B. Hassett, Special cubic fourfolds. \underline{Comp. Math.} 120 (2000), no. 1, 1–23. 
	 
	 \bibitem[HM]{Harris Morrison} 
	  J. Harris and I. Morrison, Moduli of Curves, \underline{Graduate Texts in Math.} 187 (1998), Springer New York.
	 
	 \bibitem[Laz1]{Laza 2009} R. Laza. The moduli space of cubic fourfolds, \underline{J. Algebraic Geom.} 18 (2009), no. 3, 511–545. 
	 
	 \bibitem[Laz2]{Laza 2010} R. Laza. The moduli space of cubic fourfolds via the period map. \underline{Ann. of Math.} (2), 172(1):673–711, 2010.
	 
	 \bibitem[Loo1]{Loo1} E. Looijenga. A minicourse on moduli of curves. School on Algebraic
	 Geometry (Trieste, 1999), 267291, \underline{ICTP Lect.} Notes, 1, Abdus Salam
	 Int. Cent. Theoret. Phys., Trieste, 2000.
	 
	 \bibitem[Loo2]{Looijenga} Eduard Looijenga. The period map for cubic fourfolds. \underline{Invent. Math.}, 177(1):213–233, 2009.
	 
	 \bibitem[MFK]{MFK} D. Mumford, J. Fogarty, F. Kirwan. Geometric Invariant Theory.
	 	\underline{Ergebnisse der Mathematik und ihrer Grenzgebiete. 2. Folge}, 34. Springer-Verlag Berlin Heidelberg, 1994.
 	 \bibitem[Mu1]{M1} S. Mukai, Curves, $ K3 $ surfaces and Fano 3-folds of genus $ \leq 10 $, in: \underline{Algebraic Geometry and Commutative Algebra in Honor of M. Nagata}, 357-377, Kinokuniya, Tokyo, 1988.
	 
	 \bibitem[Mu2]{M2} S. Mukai, Curves and symmetric spaces II,\underline{ Annals of Mathematics} 172 (2010), 1539-1558. 
	 
	 \bibitem[Mu3]{M3} S. Mukai. Polarized K3 surfaces of genus 18 and 20, in: \underline{Complex Projective Geometry, London Math. Soc. Lecture Notes Series} 179, 264-276, Cambridge University Press 1992. 
	 
	 \bibitem[Mu4]{M4} S. Mukai. Polarised K3 surfaces of genus 13, in: \underline{Moduli Spaces and Arithmetic Geometry} (Kyoto 2004), Advanced Studies in Pure Mathematics 45, 315-326, 2006. 
	 
	 \bibitem[Mu5]{M5} S. Mukai, K3 surfaces of genus 16, RIMS preprint 1743, 2012, available at
	http:/\hspace{-4pt}/www.kurims.kyotou.ac.jp/preprint/file/RIMS1743.pdf.
	
	\bibitem[She]{Shepherd-Barron} N.I. Shepherd-Barron, Invariant theory for $ S_{5} $ and the rationality of $ M_6 $. \underline{Compositio Mathematica} 70 (1989),3-25.
	
	\bibitem[Vo]{Voisin} Claire Voisin, Th\'{e}or\`{e}me de Torelli pour les cubiques de $ \mathbb{P}^{5} $. \underline{Invent. Math.} 86 (1986), no. 3, 577–601.
\end{thebibliography}
\end{document}